\theoremstyle{definition}
\newtheorem{theorem}{Theorem}[section]
\newtheorem{remark}[theorem]{Remark}
\newtheorem{lemma}[theorem]{Lemma}
\newtheorem{definition}[theorem]{Definition}
\newtheorem{example}[theorem]{Example}
\newtheorem{proposition}[theorem]{Proposition}
\numberwithin{equation}{section}
\DeclareMathOperator{\R}{\mathbb{R}} 
\begin{document}

\title[Embedding in Euclidean space of 
coherent configuration of type (2,2;3)]{On the two-distance embedding in real Euclidean space of 
coherent configuration of type (2,2;3)}
\author{Eiichi Bannai, Etsuko Bannai, Chin-Yen Lee, Ziqing Xiang, Wei-Hsuan Yu}

\begin{abstract}

Finding the maximum cardinality of a $2$-distance set in Euclidean space is a classical problem in geometry. Lison\v ek in 1997 constructed a maximum 
$2$-distance set in $\mathbb R^8$ with $45$ points. 
That $2$-distance set constructed by Lison\v ek has a distinguished structure of a coherent 
configuration of type $(2,2;3)$ and is embedded in two concentric 
spheres in $\mathbb R^8$. In this paper we study whether there exists 
any other similar embedding of a coherent configuration of type $(2,2;3)$ as a $2$-distance set in $\mathbb R^n$, without assuming any restriction on the size of the set. We prove that there exists no such example other than that of Lison\v ek. The key ideas of our proof are as follows: (i) study 
the geometry of the embedding of the coherent configuration in Euclidean 
spaces and to drive diophantine equations coming from this embedding.  (ii) solve diophantine equations with certain additional conditions of integrality of some parameters of the combinatorial structure by using the method of auxiliary equations.
\end{abstract}
\maketitle
\section{Introduction}

Let $X,Y$ be finite subsets of $\mathbb R^d$, we define $A(X,Y)=\{\|x-y\|: x\in X,y\in Y,\text{ and }x\neq y\}$.
A finite set $X\subset\mathbb R^d$ is called  an \emph{$s$-distance set} if $|A(X,X)|=s$. By assigning a set of linearly independent polynomials to an $s$-distance set, Bannai, Bannai and Stanton \cite{bannai1983upper}  and Blokhuis \cite{blokhuis1983few} gave an upper bound for the cardinality of an $s$-distance sets $X$ in $\mathbb R^d$,
\begin{align*}
    |X|\leq \binom{d+s}{s}.
\end{align*}

Sz{\"o}ll{\H{o}}si and {\"O}sterg{\aa}rd \cite{szollHosi2018constructions} had the latest progress for the construction of $s$-distance sets for $s\leq 6, d\leq 8$.
They gave an algorithm to exhaust the $s$-distance sets in the space of small dimension and small $s$ at least $3$.
Lison\v ek \cite{lisonvek1997new} classified all two-distance sets for the dimension $d=4,5,6$ and $7$. For dimension $8$, he gave an example which attains the maximum cardinality  $\binom{8+2}{2}=45$. However the classification problem is still open. The example of Lison\v ek, given below, is the only known $s$-distance set that  attain the Bannai-Bannai-Stanton and Blokhuis bound $\binom{d+s}{s}$. In fact, these 45 points are distributed in a very symmetrical manner. It can be divided in two parts : one part is 9 points forming a regular simplex, and the other part is $36$ points coming from the spherical embedding of a Johnson scheme $J(9,2)$.

\begin{example}(Lison\v ek) \label{example}
Let $X_1=\{    -e_i+\frac 1 3\sum_{k=1}^9e_k : 1\leq i\leq 9\}$ and $X_2= \{e_i+e_j : 1\leq i<j\leq 9\}$, where $\{e_i:1\leq i\leq 9\}$
 are the standard orthonormal basis of $\mathbb R^9$. All the points are on the hyperplane $H=\{x=\sum_{i=1}^9x_ie_i:\sum_{i=1}^9x_i=2\}$, and its affine dimension is $8$.
Then, $X_1\cup X_2$ is a maximum two-distance set in $\mathbb R^8$. Notice that the distinct two distances in $X_1 \cup X_2$ are $\sqrt 2$ and $2$. The radius of $X_1$ and $X_2$ are $\frac{2}{\sqrt 3}$ and $\sqrt 2$ respectively. 
\end{example}

In fact, $X_2\subset \mathbb R^d$ can be interpreted as a block design with the underlying set $X_1$. The union $X_1\cup X_2$ has the elegant structure:
 \begin{enumerate}
\item $  X_1$ is the regular simplex in $\mathbb R^{d}$;
\item $  X_2$ is a scaling of the spherical embedding of a strongly regular graph in $\mathbb R^{d}$; 
\item distance between point and block only depends on whether the point in block or not;
\item $|A(  X_1,   X_1)|=1$ and $|A(  X_1,  X_2)|=|A(  X_2,  X_2)|=2$.
\end{enumerate}
Condition (4) is a consequence of (1)-(3), and we list here for convenience.

A coherent configuration of type (2,2;3) is a combinatorial structure and it consists of a point set $V$, a block set $B$ and a set of finite number of relations $\{R_i\}$.
The block design in Example \ref{example} is quasi-symmetric, and it is equivalent to the type (2,2;3) coherent configuration.
We prove that such coherent configurations can always be embedded into a Euclidean space with the above structure. 
\begin{theorem}\label{embcond} Let $(V\cup B, \{R_1,\dots,R_9\})$ be a coherent configuration of type (2,2;3), where $V$ is a finite set. Then there is a map $i:V\cup B\to\mathbb R^{d}$, $d=|V|-1$,  such that  $X_1=i(V)$ and $X_2=i(B)$ satisfying the conditions (1)-(4) in Example \ref{example}.
\end{theorem}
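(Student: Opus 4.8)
The plan is to build the embedding by hand. First realize $X_{1}=i(V)$ as a regular simplex centered at the origin, and then \emph{define} the image of a block to be the sum of the images of the points it contains; this turns the incidence relation into geometry for free. The only input from the hypothesis is two combinatorial facts. Let $N$ be the $|V|\times|B|$ incidence matrix of the relation ``the point lies in the block'', let $\Gamma$ be the strongly regular graph carried by one of the two non-identity relations inside the fiber $B$ (the restriction of the configuration to $B$ being a symmetric two-class association scheme), and let $A_{\overline{\Gamma}}=J_{B}-I_{B}-A_{\Gamma}$. Since products of adjacency matrices of a coherent configuration are non-negative integer combinations of adjacency matrices, and $V\times V$ carries only the relations $I_{V}$ and $J_{V}-I_{V}$ while $B\times B$ carries only $I_{B},A_{\Gamma},A_{\overline{\Gamma}}$, we get
\[
NN^{\top}=r\,I_{V}+\lambda\,(J_{V}-I_{V}),\qquad N^{\top}N=k\,I_{B}+x\,A_{\Gamma}+y\,A_{\overline{\Gamma}},
\]
for non-negative integers $r,\lambda,k$ and $x\neq y$; equivalently $(V,B)$ is a quasi-symmetric $2$-design with block size $k$, index $\lambda$, and block-intersection numbers $x,y$, whose block graph is $\Gamma$. (We may assume distinct blocks are distinct subsets of $V$; otherwise one of $x,y$ equals $k$, which is the degenerate case excluded by the hypothesis.)

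Write $v=|V|$. Choose $x_{i}\in\mathbb{R}^{v-1}$, $i\in V$, to be the orthogonal projections of an orthonormal basis of $\mathbb{R}^{v}$ onto $\mathbf 1^{\perp}$, so that $\langle x_{i},x_{j}\rangle=\delta_{ij}-\tfrac1v$ and $\sum_{i}x_{i}=0$. The $x_{i}$ are the vertices of a regular simplex with centroid at the origin, they span $\mathbb{R}^{v-1}$, and $X_{1}:=\{x_{i}\}$ has affine dimension $v-1=d$, which is condition~(1). Now put $i(\beta):=\sum_{i\in\beta}x_{i}$ and $X_{2}:=i(B)$. Using only $\langle x_{i},x_{j}\rangle=\delta_{ij}-\tfrac1v$ one finds
\[
\langle x_{i},i(\beta)\rangle=\begin{cases}(v-k)/v,&i\in\beta,\\ -k/v,&i\notin\beta,\end{cases}\qquad\langle i(\beta),i(\gamma)\rangle=|\beta\cap\gamma|-\tfrac{k^{2}}{v}.
\]
Thus $\langle i(\beta),i(\beta)\rangle=k(v-k)/v$ is constant, so $X_{2}$ lies on a sphere about the origin; the point--block inner product takes exactly the two distinct values $(v-k)/v$ and $-k/v$ according only to incidence, which is condition~(3); and $\langle i(\beta),i(\gamma)\rangle$ takes exactly two distinct values ($x-k^{2}/v$ on the edges of $\Gamma$, $y-k^{2}/v$ off them), which with $|A(X_{1},X_{1})|=1$ gives condition~(4). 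Affine independence of the $x_{i}$ makes the $i(\beta)$ pairwise distinct, so $|X_{2}|=|B|$.

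There remain condition~(2) and the ambient dimension. Each $i(\beta)$ lies in $\operatorname{span}\{x_{i}\}=\mathbb{R}^{v-1}$, hence $i(V\cup B)\subset\mathbb{R}^{d}$ with $d=v-1$. The Gram matrix $G_{BB}$ of $X_{2}$ lies in $\operatorname{span}\{I_{B},A_{\Gamma},J_{B}\}$; as a Gram matrix it is positive semidefinite, and as the Gram matrix of vectors inside a $(v-1)$-dimensional space it has rank at most $v-1$. Its eigenvalue on $\mathbf 1$ is $0$ (indeed $G_{BB}\mathbf 1=0$ since $\sum_{\gamma}i(\gamma)=r\sum_{i}x_{i}=0$), while on the $\theta$-eigenspace of $A_{\Gamma}$, for each restricted eigenvalue $\theta$ of $\Gamma$, it equals $(k-y)+(x-y)\theta$. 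If both of these were non-zero, $G_{BB}$ would have rank $|B|-1$, forcing $|B|\le v$; but a quasi-symmetric $2$-design is non-symmetric, so $|B|>v$ by Fisher's inequality. Hence exactly one restricted eigenspace is charged, its multiplicity equals $v-1$, and $G_{BB}$ is a positive multiple of the corresponding primitive idempotent of $\Gamma$; therefore $X_{2}$ is a positive rescaling of the canonical spherical embedding of $\Gamma$ into that eigenspace, which is condition~(2).

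I expect every step except the last to be routine once the map $i(\beta)=\sum_{i\in\beta}x_{i}$ is written down --- and that choice is really the whole idea, since it encodes the incidence structure in the Gram matrix directly and so avoids any existence question. The one point that is not a pure computation is verifying condition~(2): one must exclude the possibility that $X_{2}$ is some unrelated spherical two-distance representation of the block graph rather than its eigenspace embedding, and the rank count above is precisely what rules that out. A minor loose end, already flagged, is to discard degenerate configurations with a repeated block.
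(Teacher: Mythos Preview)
Your proof is correct, and it takes a genuinely different route from the paper's. The paper works inside the adjacency algebra of the coherent configuration: it writes down the basis $\{\epsilon^s_{ij}\}$ of matrix units coming from the irreducible representations, forms the rank-$(m-1)$ symmetric idempotent $E=\tfrac12(\epsilon^2_{11}+\epsilon^2_{12}+\epsilon^2_{21}+\epsilon^2_{22})$, and defines $i(x)=Ee_x$. All four conditions are then read off from the entries of $E$, and condition~(2) is obtained by identifying $E$ restricted to $B\times B$ with the primitive idempotent $\epsilon^2_{22}$ of the block-graph scheme. By contrast you bypass the representation theory entirely: you fix the simplex by hand and set $i(\beta)=\sum_{i\in\beta}x_i$, so that the Gram matrix is forced to be $N^{\top}(I-\tfrac1v J)N$, and conditions~(1),(3),(4) fall out of a two-line computation. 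Your verification of condition~(2) via a rank count and Fisher's inequality is the nicest part of the argument and replaces the algebraic identification $2E|_{B\times B}=\epsilon^2_{22}$ in the paper; your approach is more elementary and makes the geometry transparent, while the paper's is more systematic and would generalise more readily to other types of coherent configurations.

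One sentence deserves a word more of justification: you assert that the charged eigenspace has multiplicity exactly $v-1$. Your rank bound only gives $m_\theta=\operatorname{rank}G_{BB}\le v-1$. Equality follows because the $i(\beta)$ actually span $\mathbb{R}^{v-1}$: since $N$ has full row rank $v$ (as $NN^{\top}=rI+\lambda(J-I)$ is nonsingular for a $2$-design with $k<v$), every $w=\sum_i c_ix_i$ can be written as $\sum_\beta d_\beta\, i(\beta)$ by solving $Nd=c$. Alternatively one may simply quote the standard fact that the block graph of a quasi-symmetric $2$-design has eigenvalue multiplicities $1,\,v-1,\,|B|-v$. Either way this is a one-line addition, not a gap in the argument.
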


In this paper we will consider the following problem:
When the embedding of type (2,2;3) coherent configuration in Theorem \ref{embcond} gives a two-distance set in Euclidean space?
Dropping the condition on sizes (i.e., we don't assume $|X_1\cup X_2|=\binom{d+2}{2}$) makes the problem
very broad and difficult. However, to our surprise, we are able to show our main theorem
that there is no other example at all.

\begin{theorem}(Main result) \label{main thm} 
The example given by Lison\v ek is the unique coherent configuration of type (2,2;3) which can be embedded in Euclidean space as a two-distance set satisfying the conditions given in Theorem \ref{embcond}.

\end{theorem}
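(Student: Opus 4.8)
The plan is to extract, from the Euclidean two-distance embedding of a coherent configuration of type $(2,2;3)$, a system of Diophantine equations relating the combinatorial parameters (the intersection numbers of the configuration, the block size $k$, the valencies, and the number of points $v=|V|$), and then to show that the Lison\v ek example is the only integral solution. First I would set up coordinates: by Theorem \ref{embcond} the points $X_1=i(V)$ form a regular simplex of some radius $r_1$ and $X_2=i(B)$ is a scaled spherical embedding of a strongly regular graph of radius $r_2$, with the point–block distances taking only two values according as the point lies in the block or not. Writing all inner products $\langle i(u),i(w)\rangle$ in terms of $r_1,r_2$ and the combinatorial data, and imposing that the whole set $X_1\cup X_2$ has exactly two distances, forces algebraic identities; normalizing so that the two squared distances are (say) $a$ and $b$, one gets that certain rational functions of the parameters must equal prescribed constants. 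The key structural input is that $X_2$, being a two-distance set on a sphere, carries a strongly regular graph whose eigenvalues must be rational (hence integers) unless it is a conference graph, and the point–block incidence structure is a quasi-symmetric $2$-design; so its parameters $(v,k,\lambda)$ together with the block intersection numbers are integers.

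Next I would organize the resulting constraints into a small number of Diophantine equations. Concretely, the Gram matrix of $X_1\cup X_2$ must be positive semidefinite of rank exactly $d=v-1$, which pins down one relation; the two-distance condition between a point and the blocks through it versus the blocks avoiding it gives another; and the strong regularity of the block graph (quasi-symmetry) gives the standard design equations. I expect these to collapse to one or two polynomial equations in $v$ and $k$ (and an eigenvalue parameter), after eliminating the redundant quantities. At this stage the problem becomes: find all positive integer solutions of a system like $f(v,k)=0$, $g(v,k)=\square$, subject to divisibility conditions forcing other derived quantities (valencies, multiplicities, intersection numbers) to be nonnegative integers.

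The heart of the argument, and the main obstacle, is solving this Diophantine system without any a priori bound on $v$. Here I would use the method of auxiliary equations advertised in the abstract: from the principal equation one introduces a new integer variable capturing, e.g., the ratio of multiplicities or the relevant eigenvalue, rewrites the relation as a factorization or a Pell-type/quadratic-in-one-variable identity, and uses the integrality and nonnegativity of the auxiliary quantities to bound the parameters. Typically one reduces to showing a quantity of the form (discriminant) must be a perfect square, which yields a curve of genus $0$ or $1$ or a finite list of factorizations; the nonnegativity of multiplicities (which are dimensions of eigenspaces) and of intersection numbers then cuts the solution set down to a finite, explicitly checkable list. Finally I would verify that every surviving candidate either violates an integrality/feasibility condition or reconstructs exactly the Lison\v ek configuration, and conversely that Lison\v ek's example does satisfy all equations, completing the uniqueness proof.

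I expect the delicate points to be (a) ensuring the list of Diophantine constraints is complete — i.e. that no geometric condition has been overlooked, so that the eventual candidate list is genuinely exhaustive — and (b) handling the conference-graph (irrational-eigenvalue) case for $X_2$ separately, since there the integrality of eigenvalues fails and one must rule it out by a direct parity or size argument rather than by the main Diophantine analysis.
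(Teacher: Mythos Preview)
Your overall architecture matches the paper's: set up the geometry of the embedding, extract polynomial constraints on the design parameters, and solve the resulting Diophantine system using integrality of the derived combinatorial quantities. Two concrete ingredients are missing from your sketch, however, and without them the argument does not close.

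First, the ``method of auxiliary equations'' here is not a Pell-type or discriminant-is-a-square reduction. The paper introduces the substitution $z = \frac{x(m+1)-S(S+1)}{2S}$, proves $z$ is an integer, and then rewrites $S,m,y$ as rational functions of $x$ and $z$ alone. The key trick is then to cook up, in each case, an explicit integer-coefficient polynomial $g$ in the parameters $\Lambda, T, n, k, r, N, P, x, z, \ldots$ (for instance $g = 3 + x + 19z + 16z^2 + 3k + 3\Lambda - m - 4n - 18P + 21zr$) which, after the substitution, becomes a rational function of $(x,z)$ that is provably confined to an open interval like $(0,1)$ on the relevant unbounded region of the $(x,z)$-plane. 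Since $g$ must be an integer at any admissible solution, this kills everything off that region; the remaining boundary cases are finitely many and checked directly. The bounds on $g$ are established by computer (real algebraic geometry / cylindrical decomposition style), not by hand. Your proposed discriminant/factorization route is plausible-sounding but the paper gives no indication it would succeed here, and the actual $g$'s are found by search rather than derived structurally.

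Second, even after the Diophantine analysis, one is left not with a single solution but with a one-parameter family $S=\tfrac12 z(3z+1)$, $m=\tfrac92 z(z+1)$, $x=\tfrac12 z(z+1)$, $y=\tfrac12 z(z-1)$, for which all the listed parameters are integers. To eliminate $z\geq 2$ the paper invokes an external classification: in this family $n=\binom{m}{2}$, so by Cameron--van Lint the design is a tight $4$-design, and by Enomoto--Ito--Noda the only such design (up to complementation) is the $4$-$(23,7,1)$ design, whose parameters do not fit the family. Your outline does not anticipate needing this nonexistence theorem, and without it the infinite family survives. The conference-graph case, incidentally, is dispatched in the paper via the Larman--Rogers--Seidel integrality theorem, roughly as you guessed.
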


Nozaki and Shinohara \cite{nozaki2020maximal} study two-distance sets in $\mathbb R^d$
that contain a regular simplex. They solve the case when Larman–Rogers–Seidel (LRS) \cite{larman1977two} ratio is 2. When LRS ratio is 3, they give a partial result by adding some block design structures.
So, our present paper is relevant to  \cite{nozaki2020maximal}.
However, it seems that the situation in our paper is eluded
in their consideration. 

The outline of our paper is as follows. In section 2, we introduce the notions of type (2,2;3) coherent configurations and quasi-symmetric designs and we will prove the Theorem \ref{embcond}. In section 3, we study when the embedding given by Theorem \ref{embcond} is a two-distance set. We derive the conditions for the embedding of a coherent configuration forming a two-distance set into three Diophantine equations $p_1(S,m,x,y)=p_2(S,m,x,y)=p_2(S,m,x,y)=0 $.  In section 4,  we determine the complete solutions of the system of Diophantine equations. In section 5, we prove our main result, Theorem \ref{main thm}.
Please note that we use computers crucially. The results are rigorously proved by applying the notion of real algebraic geometry. We can see the precedents of the basic idea of the proof in Xiang \cite{xiang2018nonexistence}, and Bannai, Bannai, Xiang, Yu and Zhu \cite{bannai2021classification}.

\section{Preliminaries}
We introduce the two well-known concepts, quasi-symmetric designs and  coherent configurations of type (2,2;3). They are proved  as the equivalent notions in Higman \cite{higman1987coherent}. Quasi-symmetric design is a combinatorial object  whose parameters   are non-negative integers. The integral conditions between them  are crucial for classifying them.  
\subsection{Quasi-symmetric designs}
\begin{definition}\cite{cameron1975graph}
A \emph{$t$-design} with parameters $(m,S,\Lambda)$ (or a \emph{$t$-$(m,S,\Lambda)$ design}) is a collection of subsets  $B$  (called \emph{blocks}) of a set $V$ of $m$ points such that 
\begin{enumerate}
    \item every member of $B$ contains $S$ points;
    
    \item any set of $t$ points is contained in exactly $\Lambda$ members of $B$. 
\end{enumerate}
\end{definition}

A $2$-design is called a \emph{quasi-symmetric design} if the cardinality of the intersection of two different blocks takes just two distinct values. These two numbers  are  called the \emph{intersection numbers}. We express these intersection numbers by $\alpha$ and $\beta$, with $\beta<\alpha$. By definition each element $v\in V$ contains in $T$ distinct blocks. For each point $v\in V$ and each block $b\in B$, the following condition is satisfied:
\begin{align*}
    |\{ w\in B:  |b\cap w|=\alpha\text{ and } v\in w \}|=\begin{cases}
    N,&\text{ if }v\in b,\\
    P,&\text{ if }v\not\in b.
    \end{cases}
\end{align*}
 We denote this number by $N$ when $v\in b$ and by $P$ when $v\not \in b$.
The cardinality of quasi-symmetric design is bounded above by ${m\choose 2}$ (see Proposition 3.4 in Cameron and van Lint \cite{cameron1975graph}).
 
\begin{theorem}\cite[Proposition 3.6]{cameron1975graph} \label{thm3} For a $2$-$(m,S,\Lambda)$ design $\mathcal D$ with $4\leq S\leq m-4$, any two of the following imply the third:
\begin{enumerate}
\item $\mathcal D$ is quasi-symmetric;
\item $\mathcal D$ is a $4$-design;
\item $|B|={m\choose 2}$ .
\end{enumerate}

\end{theorem}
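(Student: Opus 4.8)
The plan is to sandwich the block count $b=|B|$ between two Fisher-type bounds and then analyze the two equality cases; the equivalence ``any two imply the third'' then falls out.

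First I would record the two inequalities. If $\mathcal D$ is quasi-symmetric, its blocks form a family of $S$-subsets of $V$ whose pairwise intersections take exactly the two sizes $\beta<\alpha$, both strictly less than $S$ (distinct blocks cannot coincide); hence the Ray--Chaudhuri--Wilson bound for $L$-intersecting uniform families with $|L|=2$ — which is exactly Proposition~3.4 of Cameron and van Lint, quoted above as the bound $\binom m2$ — gives $b\le\binom m2$. If instead $\mathcal D$ is a $4$-design, then since $4\le S\le m-4$ we have $m\ge S+2$, so the generalized Fisher inequality for $2s$-designs, applied with $s=2$, gives $b\ge\binom m2$. These two bounds already settle $(1)\wedge(2)\Rightarrow(3)$: a design that is simultaneously quasi-symmetric and a $4$-design satisfies $\binom m2\le b\le\binom m2$, so $b=\binom m2$.

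The remaining two implications are exactly the equality cases of these two bounds, and this is where the real work lies. For $(2)\wedge(3)\Rightarrow(1)$: a $4$-design with $b=\binom m2$ is a tight $4$-design, and I would run the standard tight-design computation. Let $W$ be the $\binom m2\times b$ inclusion matrix of $2$-subsets versus blocks, with $W_{\{i,j\},B}=1$ exactly when $\{i,j\}\subseteq B$; the proof of the generalized Fisher inequality shows $\operatorname{rank}W=\binom m2$, so when $b=\binom m2$ the matrix $W$ is square and invertible. Because $\mathcal D$ is a $4$-design, $(WW^{\mathsf T})_{I,J}=\lambda_{|I\cup J|}$ depends only on $|I\cup J|\in\{2,3,4\}$, so $WW^{\mathsf T}$ lies in the Bose--Mesner algebra of the Johnson scheme $J(m,2)$ and is a polynomial of degree at most $2$ in its first adjacency matrix; transporting this through the invertible $W$ to $W^{\mathsf T}W$, whose $(B,B')$ entry is $\binom{|B\cap B'|}{2}$, forces $|B\cap B'|$ to take at most two values on distinct blocks, i.e.\ quasi-symmetry. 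For $(1)\wedge(3)\Rightarrow(2)$: a quasi-symmetric $2$-design with $b=\binom m2$ attains equality in the Ray--Chaudhuri--Wilson bound, and the structure theory for equality there (the same inclusion matrix $W$ becomes a bijection between $2$-subsets and blocks, using $2\le S\le m-2$) forces every $t$-subset with $t\le 4$ to lie in a constant number of blocks — that is, the $4$-design property. The role of the hypothesis $4\le S\le m-4$ is to keep all relevant binomial coefficients positive and the inclusion matrices $W_{t,S}$ for $t\le 4$ of full rank, so that these rank counts and equality analyses are valid.

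The main obstacle is the precise mechanism of the two equality cases, i.e.\ proving that a $4$-design is tight if and only if it has exactly two block-intersection numbers; everything else (the two inequalities and the easy implication) is routine. An alternative route to the equality cases uses the block graph of a quasi-symmetric $2$-design, which is strongly regular: when $b=\binom m2$ one would compare its parameters with those of the triangular graph $T(m)$ and translate the resulting coherent-configuration structure back into the $4$-design identities, but this must cope with the exceptional strongly regular graphs having triangular-graph parameters (Chang, Shrikhande) and is less clean than the inclusion-matrix argument.
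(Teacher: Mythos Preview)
The paper does not supply its own proof of this statement: Theorem~\ref{thm3} is simply quoted from \cite[Proposition~3.6]{cameron1975graph} and used as a black box later in the proof of the Main Theorem. So there is nothing in the paper to compare your proposal against.

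That said, your outline is the classical one and is essentially how Cameron and van Lint argue. The sandwich $\binom{m}{2}\le b\le\binom{m}{2}$ from the generalized Fisher inequality for $4$-designs and the Ray--Chaudhuri--Wilson bound for $2$-intersecting families gives $(1)\wedge(2)\Rightarrow(3)$ immediately, and the other two implications are precisely the equality analyses of those two bounds. One small caution on $(1)\wedge(3)\Rightarrow(2)$: equality in Ray--Chaudhuri--Wilson for an \emph{arbitrary} $L$-intersecting family does not by itself force a $2|L|$-design; you need to use that $\mathcal D$ is already a $2$-design so that the inclusion matrix $W$ of $2$-subsets versus blocks has $WW^{\mathsf T}$ in the Bose--Mesner algebra of $J(m,2)$, and then invertibility of $W$ (from $b=\binom{m}{2}$) transports this to $W^{\mathsf T}W$ and yields the $4$-design identities. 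Your sketch gestures at this but blurs the distinction between the raw RCW equality case and the extra input from the $2$-design hypothesis; make that explicit when you write it out.
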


The 4-design attaining the lower bound ${m \choose 2}$ is called a \emph{tight} 4-design. Enomoto-Ito-Noda \cite{enomoto1979tight} classified the tight 4-designs.

\begin{theorem}\cite{enomoto1979tight} \label{unique4des}Up to complementation, the $4$-$(23, 7, 1)$ design is the only tight $4$-$(m,S,\Lambda)$ design with $2<S<m-2$.
\end{theorem}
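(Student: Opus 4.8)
\emph{Sketch of proof.} The plan is to reduce a tight $4$-design to an extremely rigid quasi-symmetric design, read off from its combinatorics a system of Diophantine conditions on its parameters, and then finish with elementary number theory. First note that the range of $S$ can be sharpened for free: being a $4$-design forces $S\ge 4$, and the complement of a $4$-$(m,S,\Lambda)$ design is again a $4$-design with the same number $\binom{m}{2}$ of blocks, so $m-S\ge 4$ as well. Hence $4\le S\le m-4$ and Theorem~\ref{thm3} applies, so a tight $4$-design is automatically quasi-symmetric, with intersection numbers $\beta<\alpha$. Writing $\lambda_i$ for the number of blocks through a fixed $i$-subset (so $\lambda_0=\binom{m}{2}$ by tightness and $\lambda_4=\Lambda$), the standard identities $\lambda_i\binom{S-i}{4-i}=\Lambda\binom{m-i}{4-i}$ express $\Lambda$ and every $\lambda_i$ as explicit rational functions of $m$ and $S$, already subject to integrality constraints.

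Next I would exploit the over-determination that is peculiar to tight designs. Fix a block $b_0$ and let $m_\alpha$ and $m_\beta$ be the numbers of other blocks meeting $b_0$ in $\alpha$, respectively $\beta$, points. Counting, for each $j\in\{1,2,3,4\}$, the incident pairs formed by a $j$-subset of $b_0$ and a block $\neq b_0$ containing it gives $m_\alpha\binom{\alpha}{j}+m_\beta\binom{\beta}{j}=\binom{S}{j}(\lambda_j-1)$, and together with $m_\alpha+m_\beta=\binom{m}{2}-1$ this is a system of five equations in the four unknowns $m_\alpha,m_\beta,\alpha,\beta$. Its solvability forces polynomial relations between $m$ and $S$ that essentially collapse the parameter space to a curve and at the same time pin down $\alpha,\beta,m_\alpha,m_\beta$ as rational functions of $m$. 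The block graph $\Gamma$ on the $\binom{m}{2}$ blocks, in which two blocks are adjacent when they meet in $\alpha$ points, is then strongly regular with parameters that are rational functions of $m$; demanding that its two restricted eigenvalues and their multiplicities $f,g$ be non-negative integers yields either the ``conference graph'' alternative or a perfect-square condition on the eigenvalue discriminant together with a divisibility condition.

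Finally I would assemble all the integrality and positivity demands --- $\Lambda,\lambda_1,\lambda_2,\lambda_3\in\mathbb{Z}_{>0}$, $m_\alpha,m_\beta\in\mathbb{Z}_{\ge 0}$, $0\le\beta<\alpha<S$, and the multiplicity conditions for $\Gamma$ --- into a single Diophantine problem along the curve cut out above. The conference-graph branch is ruled out immediately, being incompatible with $\Lambda\in\mathbb{Z}$ in this range; on the main branch the perfect-square condition becomes a Pell-type equation, and intersecting its solution set with the congruences coming from the $\lambda_i$ and with $4\le S\le m-4$ leaves only a short finite list. Checking that list gives exactly $(m,S,\Lambda)=(23,7,1)$, the Witt design; its complement is a $4$-$(23,16,\cdot)$ design, so up to complementation this is the unique tight $4$-design with $2<S<m-2$.

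I expect the real obstacle to be this last step. Once the rigidity relations have reduced the problem to a single integer parameter, proving that \emph{no} solution of the associated Pell-type equation other than the one giving the Witt design survives all of the simultaneous integrality and range constraints requires coordinated control of several congruences at once --- and this delicate bookkeeping is exactly the heart of the argument of Enomoto, Ito and Noda \cite{enomoto1979tight}.
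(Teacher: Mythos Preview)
The paper does not prove this theorem at all: it is quoted verbatim as a result of Enomoto, Ito and Noda \cite{enomoto1979tight} and used as a black box in the proof of the main theorem, so there is no ``paper's own proof'' to compare against. Your sketch is a faithful outline of the classical approach to that result --- pass from tightness to quasi-symmetry via Theorem~\ref{thm3}, extract the block-graph eigenvalue and multiplicity constraints, and reduce to a one-parameter Diophantine problem --- and you correctly identify that the genuinely hard step, the simultaneous control of the Pell-type condition together with the integrality of all $\lambda_i$, is precisely what \cite{enomoto1979tight} carries out. Since the present paper neither reproves nor needs to reprove this, your proposal is appropriate as a pointer to the literature rather than as new content; in the context of this paper, a one-line citation is all that is expected.
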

By Theorem \ref{thm3}, if $\mathcal D$ is  quasi-symmetric with $|B|={m\choose 2}$ ($4\leq S\leq m-4$), then it is the $4$-$(23, 7, 1)$ design by Theorem \ref{unique4des}. This fact will be used in the proof of the Main Theorem.  

\subsection{Coherent configurations}
Here, we review the notion of coherent configuration given by Higman in \cite{higman1987coherent}. The concept of the coherent configuration is  a purely combinatorial axiomatization of  permutation groups.
\begin{definition}Let $X$ be a finite set and let $\{R_i\}_{i\in I}$ be a set of relations on $X$ such that:
\begin{enumerate}
\item $\{R_i\}_{i\in I}$ is a partition of $X\times X$;

\item $R_i^t =R_{i^*}$ for some $i^*\in I$ where $R_i^t=\{(y,x): (x,y)\in R_i\}$;

\item there is a subset $\Omega\subset I$ such that $\{(x,x):x\in X\}=\bigcup_{i\in\Omega}R_i$;

\item given $(x,y)\in R_k$, $|\{z: (x,z)\in R_i, (z,y)\in R_j\}|$ is a constant $p_{ij}^k$ which depends only on $i,j,k$.
\end{enumerate}
Then $(X,\{R_i\}_{i\in I})$ is said to be a \emph{coherent configuration}.
\end{definition}
By the third condition, $X$ is split into several parts : $X_i=\{x:(x,x)\in R_i\}$ for $i\in\Omega$ and each part is called a \emph{fiber}. 
Let $s_{i,j}$ denote  the number of relations  in $X_i\times X_j$. The \emph{type} of coherent configuration is the matrix with size $|\Omega|\times |\Omega|$ and the $(i,j)$-entry is $s_{i,j}$.
When $|\Omega|=1$, it is  an \emph{association scheme}.
Each fiber carries a structure of association scheme.
\begin{example} Let $G=(V,E)$ be a regular graph that is neither complete nor empty. Then $G$ is said to be \emph{strongly regular} with \emph{parameters} $(n,k,\lambda,\mu)$ if 
it is of order $n$, $k$-regular, every pair of adjacent vertices has $\lambda $ common neighbors, and every pair of distinct nonadjacent vertices has $\mu$ common neighbors.
The concept of strongly regular graph and symmetric association scheme with three relations are equivalent. 
The eigenvalues of $G$ are listed from large to small by $k>r>s$. The eigenvalue $s$ must be a negative number. 
The reader can find details for strongly regular graphs in \cite{godsil2001algebraic}. 
\end{example}

\begin{example} Let $\mathcal D$ be a quasi-symmetric design with intersection number $\alpha,\beta$. Define $X=V\cup B$ and
\begin{align*}
    R_1&=\{(x,x):x\in V\};\\
    R_2&=\{(x,x):x\in B\};\\
    R_3&=\{(x,y):x,y\in V,x\neq y\};\\
    R_4&=\{(x,y)\in B\times B: |x\cap y|=\alpha\};\\
    R_5&=\{(x,y)\in B\times B: |x\cap y|=\beta\};\\
    R_6&=\{(x,y)\in V\times B: x \in y\};\\
    R_7&=\{(x,y)\in V\times B: x \not\in y\};\\
    R_8&=R_6^t;\,\,\,\,\,R_9=R_7^t.
\end{align*}
Then $(X, \{R_1,\dots,R_9\})$ is a coherent configuration of type $\begin{pmatrix} 2&2\\2 &3\end{pmatrix}$ (abbreviated as  (2,2;3)). The fibers are $V$ and $B$.
\end{example}

\subsection{Connection between type (2,2;3) and quasi-symmetric designs}
Higman showed in \cite{higman1987coherent} that a coherent configuration $(X_1\cup X_2,\{R_1,\dots,R_9\})$  of type (2,2;3) is equivalent to a complementary pair of quasi-symmetric design. The relation $R_6$ restricted in $X_1\times X_2$ carries a structure of quasi-symmetric design $\mathcal D$.   There is a relation $R$ in $\{R_4,R_5\}$ restricted in $X_2\times X_2$
equivalent to $\{(x,y)\in B\times B: |x\cap y|=\alpha\}$ where $\alpha$ is the largest intersection number of $\mathcal D$. Without loss of generality, we assume $R=R_4$. The $R_4$ restricted in $X_2\times X_2$
gives a structure of  strongly regular graph (which is called the \emph{block graph} of  $\mathcal D$).

\begin{proposition}\cite[section 9C]{higman1987coherent}\label{9C}  Let $\mathcal D$ be a $2$-$(m,S,\Lambda)$ quasi-symmetric design with intersection numbers $\beta<\alpha$. We can express for the parameters  of the block graph in terms of the parameters  $S,m,\alpha,\beta$ of $\mathcal D$. 
\begin{align*}
\Lambda=&\frac{S(S-1)(S-\alpha)(S-\beta)} {S^4-2S^3-((\alpha+\beta-1)(m-1)-1)S^2+\alpha\beta m(m-1)};\\
T	=&\frac{(m-1)\Lambda}{S-1};\,\,\,\,\,N	=\frac{\alpha(m-S)(S(S-1)-\beta(m-1))\Lambda }{S(\alpha-\beta)(S-\alpha)(S-1)}; 	\\
P	=& \frac{ (S(S-1)-\beta(m-1))  \Lambda}{(\alpha-\beta)(S-1)};\,\,\,\,\,r	=\frac{  1}{\alpha-\beta}\left( \frac{(m-S)\Lambda}{S-1}-(S-\beta) \right);\\
k	=&\frac{ (m-S)(S(S-1)-\beta(m-1)  )\Lambda }{  (\alpha-\beta) (S-\alpha)(S-1)  };\,\,\,\,\,n	=\frac{mT}S ;\,\,\,\, \,s	=\frac{\beta-S}{\alpha-\beta} .
\end{align*}

\end{proposition}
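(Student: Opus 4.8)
The plan is to obtain all eight quantities from a handful of elementary double counts together with one short spectral argument; the only real work is the algebraic simplification that produces the formula for $\Lambda$. Fix a block $b_0\in B$ and, as in the statement, let $k$ be the number of blocks $w\neq b_0$ with $|b_0\cap w|=\alpha$. First I would record the two standard replication identities of a $2$-$(m,S,\Lambda)$ design: counting incident point--block pairs in two ways gives $T(S-1)=\Lambda(m-1)$, so $T=\frac{(m-1)\Lambda}{S-1}$, and counting these pairs once more against the blocks gives $nS=mT$, so $n=\frac{mT}{S}$. Next, counting flags $(p,w)$ with $p\in b_0\cap w$ and $w\neq b_0$ gives $\sum_{w\neq b_0}|b_0\cap w|=S(T-1)$, hence, since $|b_0\cap w|\in\{\alpha,\beta\}$,
\[
(\alpha-\beta)\,k=S(T-1)-\beta(n-1).
\]
Counting, for a fixed point $v\notin b_0$, the triples $(p,w)$ with $p\in b_0$, $p\in w$, $v\in w$ gives $\sum_{w\ni v}|b_0\cap w|=S\Lambda$, and comparing with $\alpha P+\beta(T-P)$ yields $(\alpha-\beta)P=S\Lambda-\beta T$. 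Finally, counting pairs $(v,w)$ with $v\in w$ and $|b_0\cap w|=\alpha$ yields $SN=\alpha k$ when $v$ ranges over $b_0$ and $(m-S)P=(S-\alpha)k$ when $v$ ranges over the complement of $b_0$.

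With these in hand the closed forms come out in order. Substituting $T=\frac{(m-1)\Lambda}{S-1}$ into $(\alpha-\beta)P=S\Lambda-\beta T$ gives $P=\frac{\Lambda\bigl(S(S-1)-\beta(m-1)\bigr)}{(\alpha-\beta)(S-1)}$; then $k=\frac{(m-S)P}{S-\alpha}$ and $N=\frac{\alpha k}{S}$ are exactly the asserted formulas for $k$ and $N$. The substantive step is $\Lambda$ itself. Eliminating $P$ between $(m-S)P=(S-\alpha)k$ and $(\alpha-\beta)P=S\Lambda-\beta T$ gives $(\alpha-\beta)(S-\alpha)\,k=(m-S)(S\Lambda-\beta T)$, while multiplying the block-count relation above by $S-\alpha$ gives $(\alpha-\beta)(S-\alpha)\,k=(S-\alpha)\bigl(S(T-1)-\beta(n-1)\bigr)$; equating these, substituting $T$ and $n$ in terms of $\Lambda$, and clearing the common factor $S(S-1)$ turns the identity into a linear equation in $\Lambda$,
\[
\Lambda\Bigl[(S-\alpha)(m-1)(S^{2}-\beta m)-S(m-S)\bigl(S(S-1)-\beta(m-1)\bigr)\Bigr]=S(S-1)(S-\alpha)(S-\beta).
\]
Expanding the bracket---the $\pm mS^{3}$ terms cancel, as do the $\pm\beta m^{2}S$ terms---rewrites it as $S^{4}-2S^{3}-\bigl((\alpha+\beta-1)(m-1)-1\bigr)S^{2}+\alpha\beta m(m-1)$, which is the claimed denominator. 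I expect this expansion to be the one place that needs care: it is routine but unforgiving of sign slips, and there is no deeper obstacle than that.

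It remains to identify the eigenvalues $r$ and $s$ of the block graph, and for this I would bring in the $m\times n$ point--block incidence matrix $M$. The $2$-design property gives $MM^{\top}=(T-\Lambda)I+\Lambda J$, whose spectrum is $\{(ST)^{(1)},(T-\Lambda)^{(m-1)}\}$; quasi-symmetry gives $M^{\top}M=(S-\beta)I+\beta J+(\alpha-\beta)A$, where $A$ is the adjacency matrix of the block graph. Since $M^{\top}M$ and $MM^{\top}$ have the same nonzero spectrum and $M^{\top}M$ additionally has eigenvalue $0$ with multiplicity $n-m$, comparison with the spectral decomposition of $A$ on the orthogonal complement of the all-ones vector forces $(S-\beta)+(\alpha-\beta)r$ and $(S-\beta)+(\alpha-\beta)s$ to equal $T-\Lambda$ and $0$ in some order; since $T>\Lambda$ for a proper design, the value $0$ goes with the smaller eigenvalue, giving $s=\frac{\beta-S}{\alpha-\beta}$ and, using $T-\Lambda=\frac{(m-S)\Lambda}{S-1}$, $r=\frac{1}{\alpha-\beta}\bigl(\frac{(m-S)\Lambda}{S-1}-(S-\beta)\bigr)$. (On the all-ones vector one recovers the block-count relation once more, and in passing one sees that the block graph, being connected, regular, and having exactly three eigenvalues, is strongly regular.) Together with the identities for $T$ and $n$ already obtained, this establishes all eight formulas.
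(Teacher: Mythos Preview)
The paper does not give its own proof of this proposition; it is simply cited from Higman \cite{higman1987coherent}. Your argument is a correct, self-contained derivation: the double counts for $T,n,P,N,k$ are all valid, the elimination that isolates $\Lambda$ is carried out correctly (the bracket indeed simplifies to $S^{4}-2S^{3}+\bigl(-(\alpha+\beta)(m-1)+m\bigr)S^{2}+\alpha\beta m(m-1)$, which matches the stated denominator), and the spectral identification of $r$ and $s$ via $M^{\top}M=(S-\beta)I+\beta J+(\alpha-\beta)A$ is the standard and correct route.

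One small presentational point: when you write ``counting pairs $(v,w)$ with $v\in w$ and $|b_0\cap w|=\alpha$ yields $SN=\alpha k$,'' it would be clearer to say explicitly that $v$ ranges over $b_0$ and that $w=b_0$ is automatically excluded since $|b_0\cap b_0|=S>\alpha$; similarly for the companion count giving $(m-S)P=(S-\alpha)k$. These are implicit in your phrasing but worth making visible, since the exclusion of $w=b_0$ is what makes the count clean.
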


\begin{remark}
The parameters $S,m,\alpha,\beta$ are integers with 
\begin{align*}
        0\leq \beta<\alpha< S<m.
\end{align*} 
By the definition of $2$-design, $\Lambda\in \mathbb Z_{\geq 1}$. By the definition of $N,P$, they belong to $\mathbb Z_{\geq 0}$. 

\end{remark}

\subsection{Representation of coherent configurations}
The \emph{adjacency matrices} of a coherent configuration are the $|X|\times |X|$ matrices $A_i$ whose $(x,y)$-entry is $1$ if $(x,y)\in R_i$ and $0$ otherwise. The \emph{adjacency algebra}  is $\mathcal A=\text{span}_{\mathbb C}\{A_i:i\in I\}$.
Let $\triangle_1,\dots,\triangle_p$ be the non-isomorphic irreducible representations of degree, $e_1,\dots,e_p$ correspondingly.
There is a basis $\{\epsilon_{ij}^s\}$ of $\mathcal A$ defined by $\triangle_t(\epsilon_{ij}^s)=\delta_{st}E_{ij}^s$, where $E_{ij}^s$ is the $e_s\times e_s$ matrix with $(i,j)$-entry 1 and all other entries 0. They satisfy the equation $\epsilon_{ij}^s\epsilon_{kl}^t=\delta_{st}\delta_{jk}\epsilon_{il}^s$.

Higman showed in \cite{higman1987coherent} that  for a  type (2,2;3) coherent configuration, the adjacency algebra $\mathcal A$ has three  non-isomorphic  irreducible representations. For $x=c_1A_1+\cdots+c_9A_9\in \mathcal A$, the three representations   $\triangle _1,\triangle _2:\mathcal A\to \text{Mat}_2(\mathbb C),\triangle_3:\mathcal A\to \mathbb C$ are defined by  \begin{align*}
\triangle_1(x)&=\begin{pmatrix}
c_1+(m-1)c_3	& \alpha_1 c_6+\alpha_2 c_7\\
\alpha_1 c_8+\alpha_2c_9& c_2+kc_4+(n-k-1)c_5
\end{pmatrix},\\
\triangle_2(x)&=\begin{pmatrix}
c_1-c_3	& \beta_1c_6+\beta_2c_7\\
\beta_1c_8+\beta_2c_9	& c_2+rc_4-(r+1)c_5
\end{pmatrix},\\
\triangle_3(x)&=(c_2+sc_4-(s+1)c_5),
\end{align*}
 where  $\alpha_1=\sqrt{ST}, \alpha_2= P^{-1}(k-N)\sqrt{ST}, \beta_1= -\beta_2=\sqrt{T-\Lambda}$.

Let $\epsilon_{11}^2=c_1A_1+\cdots+c_9A_9$. Since $\triangle_s(\epsilon_{11}^2)=\delta_{s,2}E^2_{11}$, we have $(c_2+sc_4-(s+1)c_5)=0$,
\begin{align*}
\begin{pmatrix}
c_1+(m-1)c_3	& \alpha_1 c_6+\alpha_2 c_7\\
\alpha_1 c_8+\alpha_2c_9& c_2+kc_4+(n-k-1)c_5
\end{pmatrix}=\begin{pmatrix} 0 & 0\\0&0\end{pmatrix},\\
\begin{pmatrix}
c_1-c_3	& \beta_1c_6+\beta_2c_7\\
\beta_1c_8+\beta_2c_9	& c_2+rc_4-(r+1)c_5
\end{pmatrix}=\begin{pmatrix} 1&0\\0&0\end{pmatrix}.
\end{align*}
Now, we have $9$ variables and $9$ conditions, so we can solve $c_1,\dots,c_9$. Others $\epsilon_{ij}^s$ also have $9$ variables and $9$ conditions, so we can explicitly determine them. We list the solution of $\epsilon_{ij}^2$ below,
\begin{align}\label{epsilonbasis}
&\epsilon_{11}^2=	\frac{(m-1)A_1-A_3}m,\,\,\,\,\,	\epsilon_{12}^2=	\frac{ \alpha_2A_6-\alpha_1A_7}{\alpha_1\beta_2-\alpha_2\beta_1},\,\,\,\,\,\epsilon_{21}^2	=\frac{\alpha_2A_8-\alpha_1A_9}{\alpha_1\beta_2-\alpha_2\beta_1},\notag\\
&\epsilon_{22}^2=	\frac{-((n-k-1)s+ks+k)}{n(r-s)}A_2+\frac{n-k+s}{n(r-s)}A_4+\frac{s-k}{n(r-s)}A_5, 	
\end{align}
where $n,k,r,s$ are the parameters of the strongly regular graph given by $A_4|_{X_2\times X_2}$.

\subsection{Proof of Theorem \ref{embcond}}
\begin{proof}[Proof of Theorem \ref{embcond}:]
Let $  E:=(\epsilon_{11}^2+\epsilon_{12}^2+\epsilon_{21}^2+\epsilon_{22}^2)/2$ where $\epsilon_{ij}^2$ are given in (\ref{epsilonbasis}). Then $  E^2=  E$ and $  E^t=  E$.
The trace of $  E$ is $m-1$. Namely, $E$ is a symmetric idempotent of rank $m-1$. Let $\{e_x:x\in V\cup B\}$ be the standard basis of $\mathbb R^{V\cup B}$. Observe that $\langle  E e_x,  Ee_y\rangle= e_x^t  E^t  E e_y=e_x^t  Ee_y$ is the $(x,y)$-entry of $  E$. In other words,
\begin{align}\label{emb}
\langle  E e_x,  Ee_y\rangle=\begin{cases}
(m-1)/ (2m), &x,y\in V, x=y,\\
-1/(2m), &x,y \in V,x\neq y,\\
\alpha_2/(2\alpha_1\beta_2-2\alpha_2\beta_1),&x\in V,y\in B, x\in y,\\
-\alpha_1/(2\alpha_1\beta_2-2\alpha_2\beta_1),& x\in V,y\in B,x\not\in y,\\
-((n-k-1)s+ks+k)/(2n(r-s)),	&x,y\in B,x=y,\\
(n-k+s)/(2n(r-s)),	&x,y\in B, x\cap y=a,\\
(s-k)/(2n(r-s)), 	 &x,y\in B,x\cap y=b.
\end{cases}
\end{align}
Define $i:V\cup B\to\mathbb R^{V\cup B}$ by $i(x)=Ee_x$. 
Note that $i(V)$ is on one sphere and $i(B)$ is  on another  sphere. So, $|A(i(V),i(B))|$ is the number of different inner product $\langle i(v),i(b)\rangle$ for $v\in V$ and $b\in B$. From equation (\ref{emb}) we know that it is $2$. And the distance between $i(v)$ and $i(b)$ only depends on $v$ in $b$ or not. 
Similarly, $i(V)$ is an one-distance set on a sphere which is a regular simplex in $\mathbb R^{m-1}$.
Finally, the $F=\epsilon_{22}^2$ is an idempotent of rank $m-1$ on the algebra $\text{span}_{\mathbb C}\{A_2,A_4,A_5\}$. The inner product between points in $\mathbb R^B$ is 
\begin{align*}
    \langle i(x),i(y) \rangle=\frac1 2 e_x^t 2Ee_y=\frac 1 2e_x^tFe_y=\frac 1 2 e_x^t F^tFe_y=\frac 1 2 \langle Fe_x,Fe_y\rangle.
\end{align*}
So, the $i(B)$ is the scaling of the spherical embedding of a strongly regular graph.
Now, $i$ is an embedding satisfying the conditions in the statement of theorem. \end{proof}

\begin{remark}
In general,  this embedding has at most 5 distinct distances. If we fix the regular simplex and rescale the sphere that containing $i(B)$, it is still an embedding satisfying conditions of Theorem \ref{embcond}.
We will need smartly choosing the radius $R_2$ to make the embedding as a two-distance set. How to determine the $R_2$ will be discussed in the next section.
\end{remark}

\begin{example}
For the Lison\v ek's example, the parameters of $\mathcal D$ are $(S,m,\alpha,\beta,\Lambda,T,N,P)=( 2,9, 1, 0, 1, 8, 7, 2)$ and the parameters of the block graph are $(n,k,\lambda,\mu)=(36,14,7,4)$. Now, 
\begin{align}
\langle  E e_x,  Ee_y\rangle=\begin{cases}
4/9, &x,y\in V, x=y,\\
-1/18, &x,y \in V,x\neq y,\\
-\sqrt 7 /18,&x\in V,y\in B, x\in y,\\
\sqrt 7/63,& x\in V,y\in B,x\not\in y,\\
1/9,	&x,y\in B,x=y,\\
5/126,	&x,y\in B, x\cap y=a,\\
-2/63, 	 &x,y\in B,x\cap y=b.
\end{cases}
\end{align}
Hence, the radius $R_1$ of the sphere containing $EV$ is $2/3$ and the radius $R_2$ of the sphere containing $EB$ is $1/3$.
$A(EV,EV)=\{1\}$, $A(EV,EB)=\left\{ \frac{\sqrt{\sqrt 7+5}}3, \sqrt{\frac 5 9-\frac{2\sqrt7}{63}} \right\}$, $A(EB,EB)=\{\sqrt{1/7}, \sqrt{2/7}\}$.
After rescaling $R_1$ with $\sqrt 2$ and $R_2$ with $\sqrt{14}$, $E(V\cup B)$ becomes a two-distance set where $A(E(V\cup B),E(V\cup B))=\{\sqrt{2},2\}$.
\end{example}

\section{Embedding as a two-distance set} 
 
 Let $\mathcal D=(V,B)$ be a quasi-symmetric design  and $i$ be an embedding satisfying conditions in Theorem \ref{embcond}. In this section, we assume that $|A(  i(X),  i(X) )|=|\{\sqrt{2},\sqrt{\gamma}\}|=2$ where $X=V\cup B$. The following theorem is the main theorem of this section.

\begin{theorem}\label{thm1}
Let $(V\cup B, \{R_1,\dots,R_9\})$ be a coherent configuration of type (2,2;3).
Let $\mathcal D=(V,B)$ be the corresponding  $2$-$(m,S,\Lambda)$ quasi-symmetric design  with two intersection numbers $x$ and $y$.
Suppose  the  embedding is  a two-distance set and satisfies the conditions of Theorem \ref{embcond}.
Then   $p_1(S,m,x,y)=p_2(S,m,x,y)=p_3(S,m,x,y)=0$,  where 
\begin{align*}
p_1(S,m,x,y)=& S^4 - 2S^2xm + x^2m^2 - 2S^3 + 2S^2x - 2Sxm + 2x^2m + S^2 - 2Sx + x^2;\\
p_2(S,m,x,y)= &S^4 + 2S^2xm - 4S^2ym + x^2m^2 - 4xym^2 + 4y^2m^2 - 2S^3 + 2S^2x \\
&+ 8S^2m - 6Sxm - 2x^2m- 4Sym + 4xym - 4Sm^2 + 4xm^2 + S^2 - 2Sx + x^2;\\
p_3(S,m,x,y)=&S^2x^2 - Smx^2 - 2S^2xy + 2Smxy + S^2y^2 - Smy^2 + S^2m \\
&+ 2S^2x - 2Smx - 2Sx^2 + mx^2 - 2S^2y + 2Sxy + S^2 - 2Sx + x^2.
\end{align*}
\end{theorem}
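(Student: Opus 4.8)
The plan is to compute, in the embedding $i$ produced by Theorem~\ref{embcond}, the three distinct distances $\|i(v)-i(w)\|$ for $v,w\in V$ with $v\neq w$, $\|i(v)-i(b)\|$ for $v\in V$, $b\in B$ (two values, according to $v\in b$ or $v\notin b$), and $\|i(x)-i(y)\|$ for $x,y\in B$ distinct (two values, according to $|x\cap y|=\alpha$ or $=\beta$), using the inner product table~(\ref{emb}) together with the freedom of rescaling the radius $R_2$ of the sphere through $i(B)$ noted in the remark after Theorem~\ref{embcond}. Concretely, a squared distance equals the sum of the two squared radii minus twice the corresponding inner product; since $i(V)$ sits on a sphere of a fixed radius $R_1$ determined by~(\ref{emb}) and $i(B)$ on a sphere of radius $R_2$ which is a free scaling parameter, each of the five squared distances becomes an explicit rational function of $S,m,\alpha,\beta$ (via Proposition~\ref{9C}, which expresses $\Lambda,T,N,P,n,k,r,s$ in those four integer parameters) and of $R_2$. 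Writing $x=\alpha$, $y=\beta$ for the two intersection numbers as in the statement, I would substitute the Proposition~\ref{9C} formulas and clear denominators.

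Next I impose the two-distance hypothesis $A(i(X),i(X))=\{\sqrt 2,\sqrt\gamma\}$. The single distance inside $i(V)$ is forced to be one of the two values; normalizing so that the regular simplex $i(V)$ realizes the distance $\sqrt 2$ (this is the role of the factor $\sqrt 2$ rescaling of $R_1$ seen in Lison\v ek's example) pins down the overall scale. Then among the remaining four squared distances (two from $V$--$B$ pairs, two from $B$--$B$ pairs) each must lie in $\{2,\gamma\}$. The generic and, as it turns out, only consistent matching is: one $V$--$B$ distance and one $B$--$B$ distance equal $2$, the other $V$--$B$ distance and the other $B$--$B$ distance equal $\gamma$ — matching the pattern $\{\sqrt2,2\}$ in Lison\v ek's example, where $A(EV,EB)$ and $A(EB,EB)$ each contribute one of each. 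Each forced equality ``squared distance $=2$'' and each equality between two squared distances (eliminating the unknown $\gamma$ and the free $R_2$) yields a polynomial relation in $S,m,x,y$; after clearing denominators and using a Gr\"obner basis / resultant computation to eliminate $\gamma$ and $R_2$, I expect the ideal they generate to be exactly $(p_1,p_2,p_3)$ as displayed — $p_1$ coming from one ``$=\sqrt2$'' normalization on a $V$--$B$ pair (note $p_1$ does not involve $y$, consistent with its arising from the distance to a block containing the point, which only sees $\alpha=x$), $p_2$ from the other $V$--$B$ pair, and $p_3$ from equating the two $B$--$B$ squared distances against the $V$--$B$ ones. One should also record that the alternative matchings of the four distances into $\{2,\gamma\}$ either reduce to the same system or are immediately contradictory (e.g.\ forcing two equal intersection numbers, contradicting $\beta<\alpha$), so that no generality is lost.

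The routine part is the substitution and elimination, which is a finite computation in $\mathbb Q[S,m,x,y,\gamma,R_2]$; the paper has already flagged that computers are used, so I would carry out the denominator-clearing and the resultant/Gr\"obner elimination with a computer algebra system and then verify by hand that each generator of the eliminated ideal divides, or is divisible by, the stated $p_i$ up to a nonzero scalar. The main obstacle I anticipate is \emph{bookkeeping of extraneous factors}: clearing the denominators from Proposition~\ref{9C} (which contain factors like $\alpha-\beta$, $S-\alpha$, $S-1$, and the quartic denominator of $\Lambda$) introduces spurious components that must be shown not to meet the region $0\le\beta<\alpha<S<m$, or to be excluded because $\Lambda\ge 1$ and $N,P\ge 0$ there; separating the ``true'' component cutting out the embedding condition from these parasitic factors, and checking that the three $p_i$ generate precisely that component's ideal (rather than a proper sub- or super-ideal), is the delicate step. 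Once that is settled, $p_1=p_2=p_3=0$ follows, and the complete solution of this system — hence the identification with Lison\v ek's parameters — is deferred to Sections~4 and~5 as the excerpt indicates.
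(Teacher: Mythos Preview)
Your plan has the right skeleton (compute the five distances, impose the two-distance constraint, eliminate the free scale), but it diverges from the paper's argument in a way that makes your route harder and leaves a genuine gap.

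\textbf{Different method.} The paper does \emph{not} work from the inner-product table~(\ref{emb}) combined with Proposition~\ref{9C} and then run a Gr\"obner/resultant elimination. Instead it places the simplex concretely at the standard basis $e_1,\dots,e_m$ of the hyperplane $\sum x_i=1$, so that for $v\in b$ and $w\notin b$ the relevant centroids $p=\tfrac1S\sum_{t\in b}e_t$ and $q=\tfrac1{m-S}\sum_{t\notin b}e_t$ can be computed by hand. Pythagoras then gives closed forms like
\[
d_4^2=R_2^2-2R_2\sqrt{f(m)-f(m-S)}+f(m),\qquad
d_5^2=R_2^2+2R_2\sqrt{f(m)-f(S)}+f(m),
\]
with $f(t)=(t-1)/t$, and $d_2,d_3$ come out directly as $R_2\sqrt{2(S-\alpha)m/(S(m-S))}$ etc. Imposing $d_2^2=2$ fixes $R_2$; then each of the equalities $d_4^2=2$, $d_3^2=d_5^2$, and the ratio $d_2^2/d_3^2=(S-\alpha)/(S-\beta)$ becomes an equation of the shape $(\text{polynomial})=(\text{integer multiple of }\sqrt{S-\alpha})$, and \emph{squaring once} yields $p_1$, $p_2$, $p_3$ respectively. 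No ideal-theoretic elimination is needed, and the ``extraneous factor'' bookkeeping you worry about simply does not arise.

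\textbf{A real gap.} Your matching ``one $V$--$B$ and one $B$--$B$ distance equal $2$, the others equal $\gamma$'' together with the fixed assignment $x=\alpha$, $y=\beta$ is not justified, and in fact the paper shows it is only half the story. The correct case split is on $\gamma>2$ versus $\gamma<2$; these correspond to geometrically distinct placements of $i(B)$ (inside or outside the simplex sphere), and complementation of $\mathcal D$ interchanges some of the subcases. In the end, for $\gamma>2$ one gets $p_i(S,m,\alpha,\beta)=0$, while for $\gamma<2$ one gets $p_i(S,m,\beta,\alpha)=0$; this is why the statement uses unordered intersection numbers $x,y$. Without this dichotomy your ``generic and only consistent matching'' assertion is unsupported, and a pure elimination approach would have to rediscover it as a factorization of the eliminant --- exactly the delicate step you flagged but did not resolve.
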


\subsection{Calculation of A(i(X),i(X))}
Let $A( i( V), i( V))=\{d_1\}$, $A( i( B),  i(B))=\{d_2,d_3\} $ and $A( i( V), i( B))=\{d_4,d_5\}$ with $d_2<d_3$ and $d_4<d_5$.

Let $ i( V)=\{e_1,\dots,e_m\}$ be the standard orthonormal basis. It forms a regular simplex of $\mathbb R^{m-1}$ located on the affine hyperplane 
$$ H=\left\{ x\in\mathbb R^{m}: \sum_{i=1}^{m}x_i=1\right\}\subset \mathbb R^m.$$
For convenience, we  define $f(x)=(x-1)/x$ for all $x\neq 0$. We will determine the elements in $A(i(X),i(X))$ expressed by the parameters $S,m,\alpha$, and $\beta$.

\begin{theorem}\label{thm:A(X,X)} The points set $i(V)$ and $i(B)$ are located on the spheres with radius $R_1$
and $R_2$ respectively. Then   
\begin{align*}
R_1&=\sqrt{f(m)},\\
R_2&=\begin{cases}
\sqrt{2-f(m-S)}+\sqrt{f(m)-f(m-S)}, &\gamma>2,\\
\sqrt{2-f(m-S)}-\sqrt{f(m)-f(m-S)},	&\gamma<2,
\end{cases}\\
   d_1&=\sqrt{2},\,\,\,\,\,d_2=R_2\sqrt{\frac{2(S - \alpha)m}{(m-S)S}},\,\,\,\,\, d_3=R_2\sqrt{\frac{2(S - \beta)m}{(m-S)S }},\\
   d_4&=\begin{cases}
   \sqrt{ R_2^2-2R_2\sqrt{ f(m)-f(m-S) }  +f(m)}, &\text{ if }\gamma>2,\\
   \sqrt{ R_2^2-2R_2\sqrt{ f(m)-f(S) }   +f(m)},&\text{ if }\gamma<2
   \end{cases}\\
d_5&=\begin{cases}
\sqrt {R_2^2+2R_2\sqrt{   f(m)-f(S) }+f(m)},&\text{ if }\gamma>2,\\
\sqrt {R_2^2+2R_2\sqrt{   f(m)-f(m-S) }+f(m)},&\text{ if }\gamma<2.
\end{cases}
\end{align*}
\end{theorem}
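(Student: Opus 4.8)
The plan is to make the geometry completely explicit. First I would normalize the embedding of Theorem~\ref{embcond} by an overall scaling so that the simplex edge has length $\sqrt2$; then $i(V)$ is, up to isometry, the standard orthonormal basis $\{e_1,\dots,e_m\}$ of $\mathbb R^m$, a regular simplex inside the affine hyperplane $H=\{x:\sum_i x_i=1\}$ with circumcenter $O=\tfrac1m\mathbf 1$, $\mathbf 1=\sum_i e_i$. This gives immediately $d_1=\|e_v-e_w\|=\sqrt2$ and $R_1=\|e_v-O\|=\sqrt{1-1/m}=\sqrt{f(m)}$. The whole image has affine dimension $m-1$, so $i(B)\subseteq H$; the remaining task is to locate $i(B)$ and compute the other distances, treating the circumradius $R_2$ of $i(B)$ as an unknown until the two-distance hypothesis is imposed at the end.

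Next I would pin down a normal form for the block images. By the structure furnished by Theorem~\ref{embcond}, $i(B)$ lies on a sphere of radius $R_2$ concentric with $i(V)$, i.e.\ centered at $O$, and by condition~(3) the distance $\|i(v)-i(b)\|$ depends only on whether $v\in b$; hence $\langle i(v)-O,\,i(b)-O\rangle=\tfrac12(R_1^2+R_2^2-\|i(v)-i(b)\|^2)$ takes one value for $v\in b$ and another for $v\notin b$, and these sum to $0$ over $v\in V$ since $\sum_v(i(v)-O)=0$. Because $\{e_v-O\}$ spans the direction space of $H$, a vector there is determined by its inner products against this spanning set; comparing with $\langle e_v-O,\,\chi_b-\tfrac Sm\mathbf 1\rangle=[v\in b]-\tfrac Sm$, where $\chi_b=\sum_{v\in b}e_v$ and $[v\in b]$ is the indicator of $v\in b$, forces $i(b)=O+\rho_b(\chi_b-\tfrac Sm\mathbf 1)$. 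Equality of the radii $R_2$ makes $|\rho_b|$ independent of $b$, and the identity $\langle i(b)-O,\,i(b')-O\rangle=\rho_b\rho_{b'}(|b\cap b'|-\tfrac{S^2}m)$ together with the fact that this depends only on $|b\cap b'|$ (and connectedness of the block graph) forces a common sign, so $i(b)=O+\rho(\chi_b-\tfrac Sm\mathbf 1)$ for a single nonzero real $\rho$. I expect this step---especially the concentricity and the sign-consistency of $\rho$ across blocks---to be the main technical point; once this normal form is in hand, everything else is direct computation.

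Then I would compute the distances. From $\|\chi_b-\tfrac Sm\mathbf 1\|^2=\tfrac{S(m-S)}m$ we get $R_2^2=\rho^2\tfrac{S(m-S)}m$. For two blocks, $\|i(b)-i(b')\|^2=\rho^2\|\chi_b-\chi_{b'}\|^2=2\rho^2(S-|b\cap b'|)$; substituting $\rho^2$ and the intersection values $\alpha>\beta$ gives $d_2=R_2\sqrt{2(S-\alpha)m/((m-S)S)}$ and $d_3=R_2\sqrt{2(S-\beta)m/((m-S)S)}$. For a point and a block, $\|e_v-i(b)\|^2=R_1^2+R_2^2-2\langle e_v-O,\,i(b)-O\rangle=f(m)+R_2^2-2\rho([v\in b]-\tfrac Sm)$; the elementary identities $f(m)-f(m-S)=\tfrac S{m(m-S)}$ and $f(m)-f(S)=\tfrac{m-S}{mS}$ let one rewrite $\rho\tfrac Sm=\pm R_2\sqrt{f(m)-f(m-S)}$ and $\rho\tfrac{m-S}m=\pm R_2\sqrt{f(m)-f(S)}$, so the two point--block squared distances are $f(m)+R_2^2-2R_2\sqrt{f(m)-f(S)}$ (for $v\in b$) and $f(m)+R_2^2+2R_2\sqrt{f(m)-f(m-S)}$ (for $v\notin b$) when $\rho>0$, with both middle signs reversed when $\rho<0$.

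Finally I would impose $A(i(X),i(X))=\{\sqrt2,\sqrt\gamma\}$. Since $d_1=\sqrt2$ while each of $\{d_2,d_3\}$ and $\{d_4,d_5\}$ consists of two distinct values lying in this two-element set, we must have $\{d_2^2,d_3^2\}=\{2,\gamma\}=\{d_4^2,d_5^2\}$; in particular exactly one of the two point--block squared distances above equals $2$, and which one (hence the sign of $\rho$) is decided by whether $\gamma>2$ (then the shorter point--block distance $d_4=\sqrt2$, so $\rho<0$) or $\gamma<2$ (then the longer one $d_5=\sqrt2$, so $\rho>0$). Setting that squared distance equal to $2$ is a quadratic in $R_2$ whose roots are $\pm\sqrt{2-f(m-S)}+\sqrt{f(m)-f(m-S)}$ in the first case and $\pm\sqrt{2-f(m-S)}-\sqrt{f(m)-f(m-S)}$ in the second; since $f(m)<1$ gives $2-f(m-S)>f(m)-f(m-S)$, the unique positive root is precisely the $R_2$ in the statement. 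Substituting this value of $R_2$ into the formulas of the previous paragraph yields the asserted $d_2,d_3,d_4,d_5$, and one checks directly that $d_2<d_3$ and $d_4<d_5$ hold with these values, so the labelling is consistent.
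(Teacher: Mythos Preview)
Your route is cleaner than the paper's. Where the paper (Lemmas~\ref{par1}--\ref{par2}) argues geometrically via the centroids $p=\tfrac1S\sum_{t\in b}e_t$ and $q=\tfrac1{m-S}\sum_{t\notin b}e_t$, splits the picture into the four embedding types $\iota_1,\dots,\iota_4$ of Figure~\ref{fig1}, and then runs an eight-case table, you go straight to the closed form $i(b)=O+\rho\bigl(\chi_b-\tfrac Sm\mathbf 1\bigr)$ and read off every distance from inner products. The computations you outline are correct, and the identities $f(m)-f(m-S)=\tfrac{S}{m(m-S)}$ and $f(m)-f(S)=\tfrac{m-S}{mS}$ are exactly what is needed to match the stated formulas.

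The one place where your sketch is too quick is the claim that $\gamma>2$ \emph{forces} $\rho<0$ (and dually $\gamma<2$ forces $\rho>0$). Knowing merely that $d_4=\sqrt2$ does not pin down which of your two point--block formulas is the short one: for either sign of $\rho$ the equation $d_4^2=2$ has a positive root for $R_2$, only with $f(S)$ and $f(m-S)$ interchanged, so the quadratic you wrote down is just one of two a priori candidates. The paper does not settle this by an inequality but by a symmetry: since $\chi_{\bar b}-\tfrac{m-S}{m}\mathbf 1=-\bigl(\chi_b-\tfrac Sm\mathbf 1\bigr)$, replacing $\mathcal D$ by its complement $\overline{\mathcal D}$ flips the sign of $\rho$ while producing the same point set $i(X)$, and the paper uses this to reduce (``without loss of generality'') to the cases labelled (A), (D), (F), (G), all of which have the sign convention matching the theorem; the remaining $R_1$ versus $R_2$ comparison then eliminates (D) and (F). You need to insert the analogous ``pass to $\overline{\mathcal D}$ if necessary'' step; without it, the formulas you derive are those of one of the two complementary designs, and you have not said which.
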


The Theorem \ref{thm:A(X,X)} is the consequence of Lemma \ref{par1} and Lemma \ref{par2}. 

\begin{lemma}\label{par1} The radius $R_1$ and the distances $d_1,d_2,$ and $d_3$ between the points  satisfy the formula in Theorem \ref{thm:A(X,X)}.
\end{lemma}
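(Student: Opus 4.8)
The plan is to compute the three quantities $R_1$, $d_1$, and the two inter-block distances $d_2,d_3$ directly from the Gram matrix of the embedding, i.e.\ from the inner-product formula (\ref{emb}), and check they match the formulas claimed in Theorem \ref{thm:A(X,X)}. Since the map $i$ sends $x\mapsto Ee_x$ with $E$ the symmetric idempotent constructed in the proof of Theorem \ref{embcond}, every distance is recovered via $\|i(x)-i(y)\|^2=\langle i(x),i(x)\rangle+\langle i(y),i(y)\rangle-2\langle i(x),i(y)\rangle$, so the whole lemma is a matter of substituting the entries of (\ref{emb}) and simplifying. I would organize the computation around the function $f(t)=(t-1)/t$ introduced just before the statement, since the answers are meant to come out in terms of $f(m)$, $f(S)$, $f(m-S)$.

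First I would handle $R_1$ and $d_1$, the easy part: on the fiber $V$ the diagonal entry of $E$ is $(m-1)/(2m)$ and the off-diagonal entry is $-1/(2m)$, but one must remember the final rescaling in the last remark of Section 2 (the radius $R_2$ is a free parameter and the paper rescales so that $d_1=\sqrt2$). Concretely $i(V)$ is the standard regular simplex realized as $\{e_1,\dots,e_m\}$ on the hyperplane $H$, so $\|e_i-e_j\|^2=2$ immediately gives $d_1=\sqrt2$, and $R_1^2=\|e_i-c\|^2$ where $c=\frac1m\sum e_k$ is the centroid, which is $1-1/m=f(m)$; hence $R_1=\sqrt{f(m)}$. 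Next, for $d_2$ and $d_3$ I would use that $i(B)$ lies on a sphere of radius $R_2$ (to be pinned down in Lemma \ref{par2}) and that, up to the scaling factor, the inner products among block-vectors are governed by the strongly regular block graph: two blocks meeting in $\alpha=x$ points versus $\beta=y$ points give the two inner-product values in the last three lines of (\ref{emb}). Writing $\langle i(b),i(b')\rangle = R_2^2 - \tfrac12\|i(b)-i(b')\|^2$ and expressing the relevant inner products of the block-intersection incidence vectors — namely that a block is a subset of size $S$ of an $m$-set and two blocks share $x$ (resp.\ $y$) points — yields $\|i(b)-i(b')\|^2 = R_2^2\cdot \frac{2(S-x)m}{(m-S)S}$ (resp.\ with $y$), which is exactly the claimed $d_2,d_3$ after identifying $\alpha=x$, $\beta=y$. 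The cleanest route is to observe that the $B$-block of $E$, rescaled, is the projection $F=\epsilon_{22}^2$ onto the standard module of the block graph, so the squared distances are proportional to $1-(\text{eigenvalue-ish ratio})$, and a short computation with the Johnson-type geometry (embed each block as its $0/1$ indicator in $\mathbb R^m$, project off the all-ones vector) gives the intersection-number dependence transparently.

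The main obstacle I anticipate is bookkeeping the rescaling consistently: the embedding in Theorem \ref{embcond} is only determined up to independently scaling the two concentric spheres, so one must fix the normalization (here, $d_1=\sqrt2$, which forces $R_1=\sqrt{f(m)}$ after scaling the $V$-sphere) and then carry that same scale through to the $B$-vectors, where $R_2$ is still free and is only determined in Lemma \ref{par2} by the two-distance requirement $A(i(V),i(B))\subseteq\{d_2,d_3\}$. For Lemma \ref{par1} itself, though, $R_2$ can be treated as a symbol, so the only real work is: (i) translate each case of (\ref{emb}) into a squared distance, (ii) recognize the combinatorial identities $\langle \mathbf 1_b,\mathbf 1_b\rangle=S$, $\langle\mathbf 1_b,\mathbf 1_{b'}\rangle\in\{x,y\}$ for the $0/1$ indicator vectors and subtract the centroid/all-ones component correctly, and (iii) simplify the resulting rational expressions into the $f(\cdot)$ form. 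None of these steps is deep; the risk is purely a sign or a factor of $m/(m-S)$ going astray, so I would double-check against Lison\v{e}k's numerical example $(S,m,\alpha,\beta)=(2,9,1,0)$, where the stated formulas must return $R_1=2/3$ (before the $\sqrt2$ rescaling) and $d_2,d_3$ reproducing $\{\sqrt{1/7},\sqrt{2/7}\}\cdot(\text{scale})$, as recorded in the example at the end of Section 2.
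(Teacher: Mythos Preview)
Your proposal is correct and follows essentially the same route as the paper: compute $R_1$ and $d_1$ directly from the regular simplex $\{e_1,\dots,e_m\}$ with centroid $\tfrac1m(1,\dots,1)$, then obtain $d_2,d_3$ from the spherical embedding of the block graph on a sphere of undetermined radius $R_2$, treating $R_2$ as a free symbol at this stage. The only stylistic difference is that the paper writes the two block--block distances first as $R_2\sqrt{2-2r/k}$ and $R_2\sqrt{2-2(-1-r)/(n-k-1)}$ in terms of the strongly regular graph parameters and then substitutes Proposition~\ref{9C}, whereas you propose to reach the same expressions via the $0/1$ indicator vectors of blocks; both paths are short and equivalent.
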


\begin{proof}
We assume that the vertices of the regular simplex are  $e_1,e_2,\dots,e_m$ and the center of the simplex is $(\frac 1 m, \frac 1 m, \cdots, \frac 1 m)$. With easy calculation, we can obtain $R_1=\sqrt{\frac{m-1}{m}}=\sqrt{f(m)}$ and $d_1=\sqrt 2$. 
We require the $ i( B)$ as the spherical embedding of a strongly regular graph. If it is on the sphere with radius $R_2$, then the elements in $A(i(B),i(B))$ are
\begin{align*}
 R_2\sqrt{2-2 \frac r k},\,\,\,\,\, R_2\sqrt{2-2\frac{-1-r} {n-k-1}}
\end{align*}
due to the formula (\ref{emb}) (the block graph is neither complete nor empty,  $n-k-1>0$ and $k>0$). If we substitute $r, k$ by $S,m,\alpha,\beta$, assuming $\beta<\alpha$ and $d_2<d_3$,
we have $d_2= R_2\sqrt{\frac{2(S - \alpha)m}{(m-S)S}} $ and $d_3=R_2\sqrt{\frac{2(S - \beta)m}{(m-S)S }} $.
\end{proof}

The centroid of the regular simplex $i(V)$  is $ o=\frac 1{m} (1,\dots,1)$. Take $v,w\in V$ and $b\in B$ with $v\in b$ and $w\not\in b$. Let $$p=\frac{1}{S}\sum_{\substack{t\in V\\ t\in b}}   i(t),\,\,\,\,\,  q=\frac 1 {m-S}\sum_{\substack{t\in V\\ t\not\in b}}  i(t)$$ be the centroids of points in $b$ and points not in $b$. Direct computation shows that $\langle  i( v)- p,p- o\rangle=0=\langle o- q,  q- i( w)\rangle$. Geometrically, we may regard that the centroid of $v$ will lie in the perpendicular bisector plane.  We have
$ \|  i(v)-p\|_2= \| (1,0, \cdots, 0) - (\frac 1 S, \frac 1 S, \dots, \frac 1 S, 0, \dots, 0)\|_2= \sqrt {\frac{S-1}{S}}= \sqrt{f(S)} $. Similarly, we can obtain $\|  i(w)-q\|_2=\sqrt{f(m-S)}
$. To visualize our calculation, we give the figure \ref{fig4} in the following.
\begin{figure}[htb]
  \centering
\includegraphics[scale=0.6]{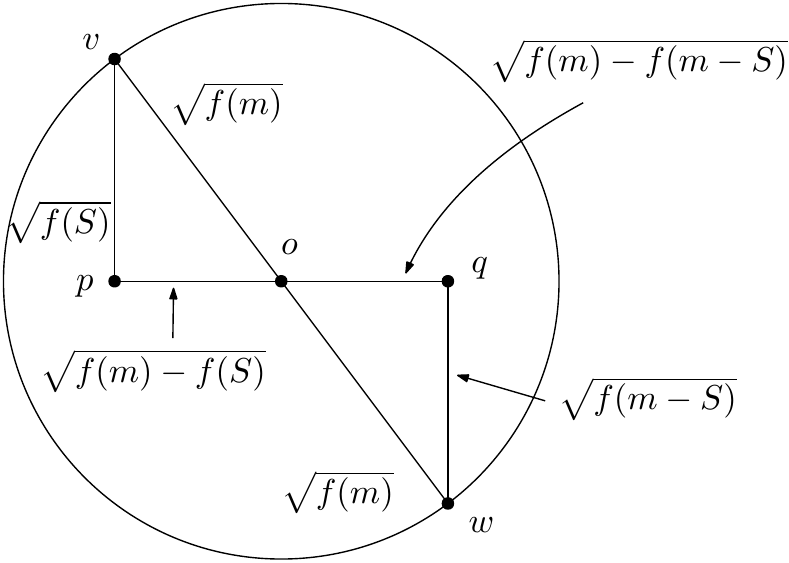}
    \caption{length of $\|i(v)-p\|_2,\|p-o\|_2,\|o-q\|_2,\|q-i(w)\|_2$}\label{fig4}
\end{figure}

We divide $i$   into $4$ cases. If $i$ belongs to case $t$, then use  $\iota_t$ denote $i$.
First, the embedding of vertices located in a smaller sphere or the other way. Let $\iota_1,\iota_2$ send vertices to the smaller sphere and $\iota_3,\iota_4$ send vertices to the larger sphere.  Second, $\iota_1,\iota_4$ send elements not in block located closer than elements in block, and $\iota_2,\iota_3$ sends elements in block located closer than elements not in block.
We show the $4$ categories geometrically in the Figure \ref{fig1}.
\begin{figure}[htb]
    \begin{minipage}[t]{.45\textwidth}
        \centering
\includegraphics[scale=0.6]{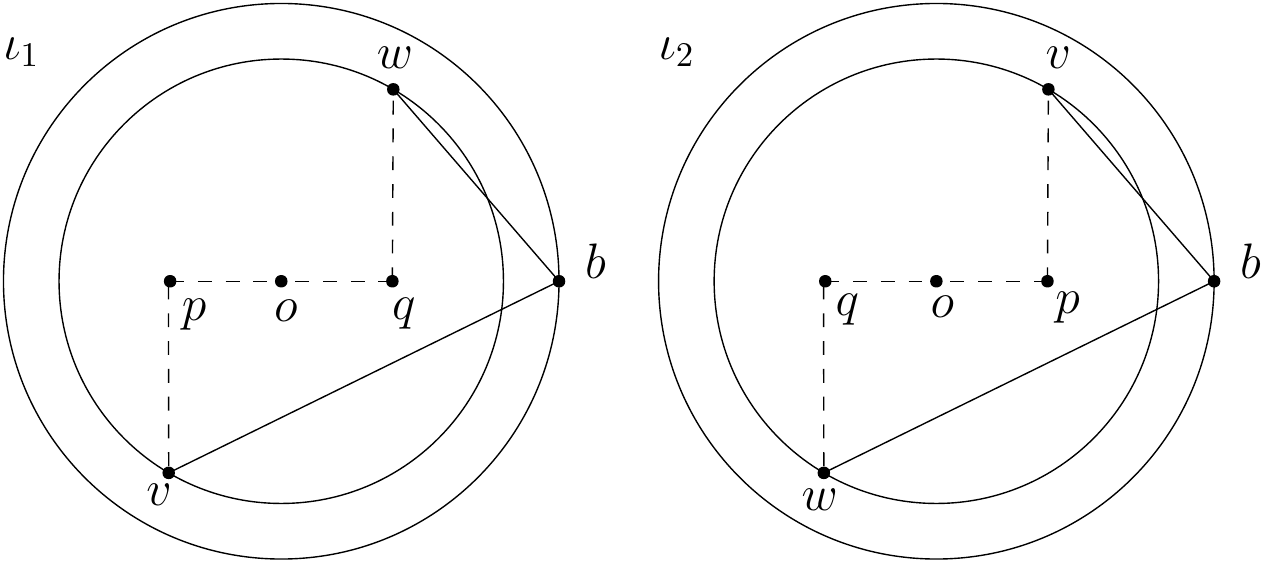}
    \end{minipage}
    \hfill
    \begin{minipage}[t]{.45\textwidth}
        \centering
\includegraphics[scale=0.6]{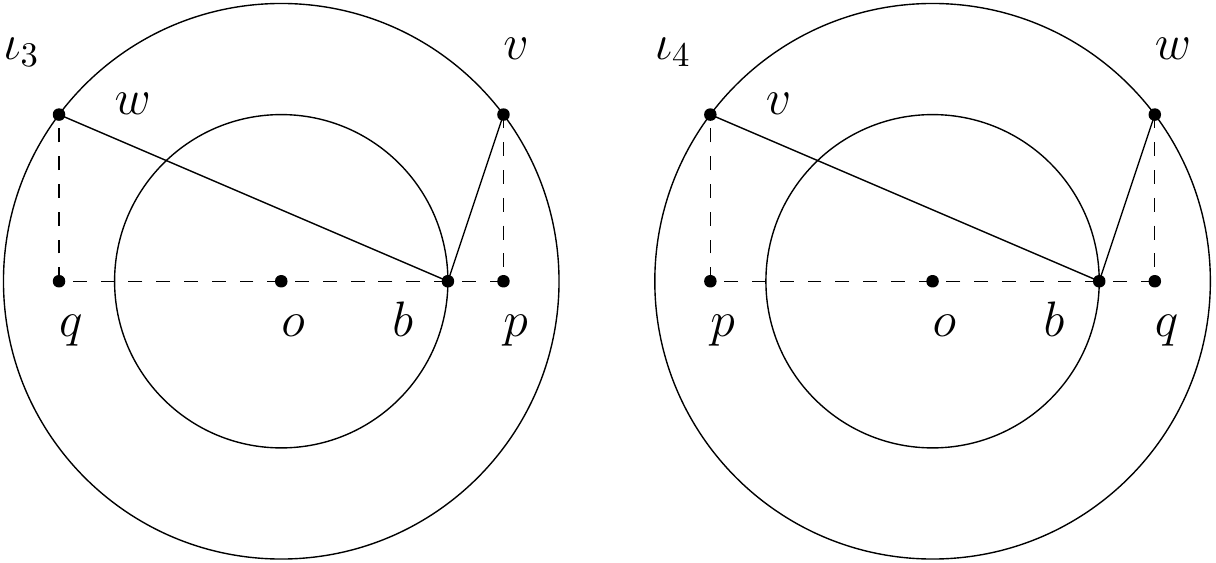}
    \end{minipage}  
    \caption{$\iota_1,\iota_2,\iota_3,\iota_4$}\label{fig1}
\end{figure}
\begin{figure}[htb]
        \centering
\includegraphics[scale=0.5]{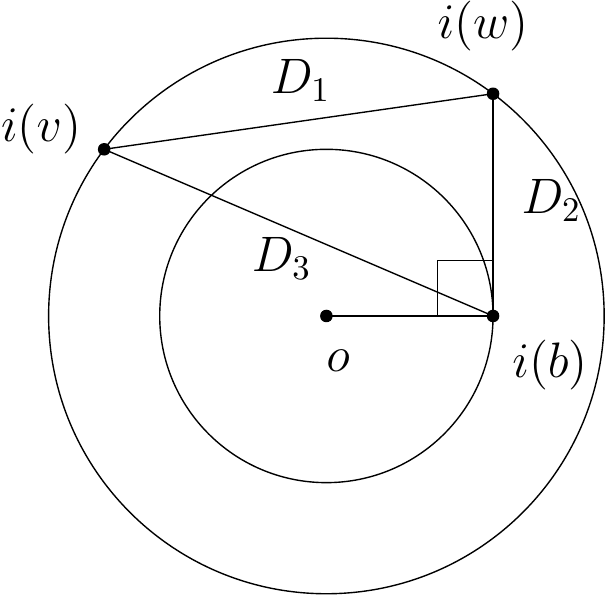}
    \caption{$i(b)=q$ }
    \label{fig2}
\end{figure}

\begin{remark} It is impossible that $i(b)=q$ (see the Figure \ref{fig2}). We will explain it in the following. 
Let $D_1=\|i(v)-i(w)\|_2$, $D_2=\|i(w)-i(b)\|_2$ and $D_3=\|i(v)-i(b)\|_2$.
We already knew $D_1=\sqrt 2$ and $D_2=\sqrt{f(m-S)}$. Now,
\begin{align*}
    D_3=\left\|\left(1,\underbrace{0,\dots,0}_{S-1 \text{ terms}},\underbrace{0,\dots, 0}_{m-S\text{ terms}}\right)-\frac 1{m-S}\left(\underbrace{0,\dots,0}_{S \text{ terms}},\underbrace{1,\dots, 1}_{m-S\text{ terms}}\right)\right\|_2=\sqrt{\frac{m-S+1}{m-S}}>1.
\end{align*}
Hence, $D_3>D_2$. According to the  two-distance set assumption, $D_3=D_1=\sqrt{2}$  implies $m-S=1$ and $D_2=\sqrt{f(m-S)}=0$. Hence, we send every point not in block to the same place which is a contradiction.

\end{remark}

\begin{lemma}\label{par2} The radius $R_2$ and the distances  $d_4$ and $d_5$ between the points satisfy the formula in Theorem \ref{thm:A(X,X)}.
\end{lemma}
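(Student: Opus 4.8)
The plan is to locate each block image $i(b)$ with respect to the regular simplex $i(V)$, to compute the two point--block distances by orthogonal projection, and then to extract $R_2$ from the two--distance hypothesis. Write $o$ for the centroid of $i(V)$, and for a fixed block $b$ let $p$, $q$ be the centroids of $\{i(v):v\in b\}$ and $\{i(w):w\notin b\}$, so $S(p-o)+(m-S)(q-o)=0$. The first step is to show $i(b)$ lies on the line through $o$ and $p$. Since $\langle i(v)-o,\,i(b)-o\rangle$ depends only on whether $v\in b$ (condition (3) of the embedding, equivalently the relation structure / formula (\ref{emb})) and the centered simplex $\{i(v)-o\}$ has Gram matrix $I-\frac1mJ$ with $\sum_v(i(v)-o)=0$, expanding $i(b)-o=\sum_v\lambda_v(i(v)-o)$ with $\sum_v\lambda_v=0$ gives $\lambda_w=\langle i(b)-o,\,i(w)-o\rangle$, which takes only two values; hence $i(b)-o$ is a combination of $\sum_{v\in b}(i(v)-o)$ and $\sum_{w\notin b}(i(w)-o)$, which by the displayed relation collapses to $i(b)-o=t(p-o)$ for a scalar $t$. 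Thus $R_2=|t|\,\|p-o\|=|t|\sqrt{f(m)-f(S)}$, and the sign of $t$ equals that of $\langle i(v)-o,\,i(b)-o\rangle$ for $v\in b$, which is determined (after the rescaling) by the explicit entries of (\ref{emb}).

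Secondly, using the orthogonal splittings recorded just before the lemma --- $i(v)-p\perp p-o$ with $\|i(v)-p\|=\sqrt{f(S)}$ for $v\in b$, and $i(w)-q\perp q-o$ with $\|i(w)-q\|=\sqrt{f(m-S)}$ for $w\notin b$ --- together with $f(S)+\|p-o\|^2=f(m)=f(m-S)+\|q-o\|^2$ and the relation of Step 1, a short expansion yields for the distances $\delta_{\mathrm{in}}$ (to incident points) and $\delta_{\mathrm{out}}$ (to non--incident points):
\[
\delta_{\mathrm{in}}^2=f(m)+R_2^2\mp 2R_2\sqrt{f(m)-f(S)},\qquad \delta_{\mathrm{out}}^2=f(m)+R_2^2\pm 2R_2\sqrt{f(m)-f(m-S)},
\]
where the upper signs correspond to $t>0$. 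By the uniformity in (\ref{emb}) these are independent of $b$, so $A(i(V),i(B))=\{\delta_{\mathrm{in}},\delta_{\mathrm{out}}\}$; also $\delta_{\mathrm{in}}\ne\delta_{\mathrm{out}}$ and $\delta_{\mathrm{out}}^2-\delta_{\mathrm{in}}^2=\pm2R_2\bigl(\sqrt{f(m)-f(S)}+\sqrt{f(m)-f(m-S)}\bigr)$, so $d_4=\min(\delta_{\mathrm{in}},\delta_{\mathrm{out}})$ is $\delta_{\mathrm{in}}$ for $t>0$ and $\delta_{\mathrm{out}}$ for $t<0$, with $d_5$ the other. Splitting further by whether $R_1<R_2$ or $R_1>R_2$ recovers the four cases $\iota_1,\dots,\iota_4$ of Figure \ref{fig1}.

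It remains to determine $R_2$. From $\{d_1,\dots,d_5\}=\{\sqrt2,\sqrt\gamma\}$, $d_1=\sqrt2$, and $\delta_{\mathrm{in}}\ne\delta_{\mathrm{out}}$ one gets $\{\delta_{\mathrm{in}},\delta_{\mathrm{out}}\}=\{\sqrt2,\sqrt\gamma\}$, so exactly one of them is $\sqrt2$. The key assertion is that it is always the non--incident distance, $\delta_{\mathrm{out}}=\sqrt2$; granting this, substituting $\delta_{\mathrm{out}}^2=2$ into Step 2 gives $R_2^2\pm2R_2\sqrt{f(m)-f(m-S)}+(f(m)-2)=0$, whose unique positive root is $R_2=\sqrt{2-f(m-S)}\mp\sqrt{f(m)-f(m-S)}$, and the sign dictionary is forced by $t<0\iff d_4=\delta_{\mathrm{out}}=\sqrt2\iff\sqrt2=\min(\sqrt2,\sqrt\gamma)\iff\gamma>2$ --- matching Theorem \ref{thm:A(X,X)}, and feeding $R_2$ back into Step 2 gives the asserted $d_4,d_5$.

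The main obstacle is proving $\delta_{\mathrm{out}}=\sqrt2$, i.e.\ excluding the branch $\delta_{\mathrm{in}}=\sqrt2$: that branch would give $R_2=\sqrt{2-f(S)}\pm\sqrt{f(m)-f(S)}$, an expression in $f(S)$ rather than $f(m-S)$, and ruling it out is not a purely metric matter --- one must either invoke the representation--theoretic sign of $t$ from Step 1, or combine the point--block distances with the block--block distances $d_2,d_3$ of Lemma \ref{par1} (this is the spirit of the Remark excluding $i(b)=q$), to cut down the four cases $\iota_1,\dots,\iota_4$ to the two that survive. Once that sign is settled, the rest is routine bookkeeping with the two Pythagorean identities.
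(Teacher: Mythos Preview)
Your Steps 1--2 are sound and in fact sharper than the paper's presentation: you give a clean linear-algebra reason why $i(b)$ lies on the line through $o$ and $p$ (the paper simply reads this off the figure), and your Pythagorean formulas for $\delta_{\mathrm{in}},\delta_{\mathrm{out}}$ coincide with the paper's $d_4,d_5$ in each of the cases $\iota_1,\dots,\iota_4$.

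The gap you flag in Step 3, however, is real and you have not closed it. The paper does not prove the ``key assertion'' $\delta_{\mathrm{out}}=\sqrt 2$ either --- because it is \emph{not} true in general. In the paper's case (B) (that is, $\iota_2$ with $\gamma>2$) one has $d_4=\delta_{\mathrm{in}}=\sqrt 2$, and likewise in (C), (E), (H). What the paper does instead is a two-step reduction:
\begin{enumerate}
\item[(i)] \textbf{Complementation.} Observe that $\iota_1(\mathcal D)=\iota_2(\overline{\mathcal D})$ and $\iota_3(\mathcal D)=\iota_4(\overline{\mathcal D})$ as point configurations, so after possibly replacing $\mathcal D$ by its complement one may assume the embedding falls in one of the cases (A), (D), (F), (G).
\item[(ii)] \textbf{Consistency of the sphere radii.} In each of those four cases, solve the quadratic in $R_2$ coming from $d_4=\sqrt 2$ (when $\gamma>2$) or $d_5=\sqrt 2$ (when $\gamma<2$), take the unique positive root, and discard (D) and (F) because that root violates the inequality $R_1\lessgtr R_2$ defining the case.
\end{enumerate}
This is precisely the ``cut down the four cases to the two that survive'' route you gesture at; neither the representation-theoretic sign of $t$ nor the block--block distances $d_2,d_3$ are actually used. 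The complementation step is the missing ingredient: it trades the undecidable question ``is it $\delta_{\mathrm{in}}$ or $\delta_{\mathrm{out}}$ that equals $\sqrt 2$?'' for a WLOG on $\mathcal D$, and then the check $R_1\lessgtr R_2$ fixes the sign of the square root in $R_2$. To complete your argument, replace the unproved assertion $\delta_{\mathrm{out}}=\sqrt 2$ by this two-step reduction.
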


\begin{proof}

According to $\gamma>2$ or $\gamma<2$,  there are total $8$ cases need to be considered.
We listed them in Table \ref{tab2}.

\begin{table}[h]
\centering
\begin{tabular}{ccccc}
	&$\iota_1$&$\iota_2$&$\iota_3$&$\iota_4$\\
$\gamma>2$ &(A)&(B)&(C) &(D) \\
$\gamma<2$ &(E)&(F)&(G)&(H)
\end{tabular}
\caption{$8$ cases}\label{tab2}
\end{table}
Now, we can determine the $d_4$ and $d_5$. 
Take $v,w\in V$ and $b\in B$ with $v\in b$ and $w\not\in b$. According to Figure \ref{fig4}, for case 1 and case 4, we have
$d_5^2=\|i(v)-i(b)\|^2_2=\|p-i(v)\|_2^2+ (R_2+\|o-p\|_2)^2 =f(S)+(R^2_2+\sqrt{f(m)-f(S)} )^2
$. Also, $d_4^2=\|i(w)-i(b)\|^2_2=\|q-i(w)\|_2^2+ (R_2-\|o-q\|_2)^2 =f(m-S)+(R^2_2+\sqrt{f(m)-f(m-S)} )^2$. 
Hence,
\begin{align*}
&\|i(b)-i(v)\|_2 = \begin{cases} 
d_4=\sqrt{ R_2^2-2R_2\sqrt{ f(m)-f(m-S) }   +f(m)}, &\text{ if }v\not \in b;\\
d_5=\sqrt {R_2^2+2R_2\sqrt{   f(m)-f(S) }+f(m)},&\text{ if } v\in b.
\end{cases}
\end{align*}
For case 2 and case 3, 
\begin{align*}
&\|i(b)-i(v)\|_2 = \begin{cases} 
d_4=\sqrt{ R_2^2-2R_2\sqrt{ f(m)-f(S) }   +f(m)}, &\text{ if }v\in b;\\
d_5=\sqrt {R_2^2+2R_2\sqrt{   f(m)-f(m-S) }+f(m)},&\text{ if } v\not\in b.
\end{cases}
\end{align*}
Denote the complement of $\mathcal D$ as $\overline{\mathcal D}$. 
If the embedding of $\mathcal D$ is case 1, the embedding of $\overline{\mathcal D}$ is case 2. Moreover, $\iota_1(\mathcal D)=\iota_2(\overline{\mathcal D})$.
The same is true for  case 3 and case 4. 
This reduces to four cases. Without loss of generality, we consider cases (A), (D), (F), (G).

Suppose $\gamma>2$.
From the formula for $d_4, d_5$, we find that $R_2$  is a root of the degree two  polynomial 
$x^2+2x\sqrt{f(m)-f(m-S)} +f(m)-d_4^2$. But, since $f(m)<2$, the only positive root is
$ \sqrt{ f(m)-f(m-S)} + \sqrt{2- f(m-S) } $.
Since this number greater than $R_1$ we conclude that case (D) is impossible.

Suppose $\gamma<2$.
From the formula for $d_4, d_5$, we find that $R_2$ is a root of the polynomial 
$x^2-2x\sqrt{f(m)-f(S)} +f(m)-d_4^2$. The only positive root is
$ \sqrt{2-f(m-S)}-\sqrt{f(m)-f(m-S)} $.
Since this number is less than $R_1$, we conclude that case (F) is impossible. \end{proof}

\begin{remark}
The parameters of conference graphs are
$(n,(n-1)/2,(n-5)/4,(n-1)/4)$. If the block graph is a conference graph, then $d:=m-1$ is the multiplicity of $r$. Hence, $n=2m-1$. Now, the $|X|=m+n>2d+3$ (when $m>2$). By \cite[Theorem 2]{larman1977two} and Theorem \ref{thm:A(X,X)}, $s\in\mathbb Z$ (and hence $r\in\mathbb Z$).
\end{remark}

\subsection{Proof of Theorem \ref{thm1}}
\begin{proof}[Proof of Theorem \ref{thm1}] Let $A(i(X),i(X))=\{\sqrt 2,\sqrt{\gamma}\}$ and let $(S,m,\alpha,\beta)$ be the parameters of $\mathcal D$. 
 We will prove the following statement:
For $\gamma>2$, 
$p_1(S,m,\alpha,\beta)=p_2(S,m,\alpha,\beta)=p_3(S,m,\alpha,\beta)=0$ and for $\gamma<2$, 
$p_1(S,m,\beta,\alpha)=p_2(S,m,\beta,\alpha)=p_3(S,m,\beta,\alpha)=0$.

If $\gamma>2$, then the embedding is of case 1. Since $2=d_2^2=R_2^2 ( 2(S-\alpha)m)/(mS-S^2) $, we have $R_2=\sqrt{\frac{ (m-S)S}{(S-\alpha)m} } $ and 
$
d_4^2 =2=R_2^2-2R_2\sqrt{f(m)-f(m-S)} +f(m)$ and equivalent to 
$ 2S \sqrt{S-\alpha} = - (S^2 - \alpha m + S - \alpha)$ which implies
$ p_1(S,m,\alpha,\beta)=0$.

Next,
$
d_3^2=d_5^2$ if and only if $ R_2^2   (2(S -\beta )m  )/(mS -S^2 ) = R_2^2+2R_2\sqrt{f(m )-f(S )} +f(m ) $ and equivalent to  $2(m -S ) \sqrt{S -\alpha } =-(S ^2 + \alpha m  - 2\beta m  + S  - \alpha )$ which implies $p_2(S ,m ,\alpha ,\beta )=0$.
By Lemma \ref{par1}, $R_2=\sqrt{2-f(m -S )}+\sqrt{f(m )-f(m -S )}$. 
By Theorem \ref{thm:A(X,X)},
$
\frac{d_2^2}{d_3^2}=\frac{-s-1}{-s}.
$ So,
$
\frac{d_2^2}{d_3^2}=f(-s )$ if and only if $$ \frac{2}{ R_2^2+2R_2\sqrt{f(m )-f(S )}+f(m )} =\frac{S -\alpha }{S -\beta }$$ which 
implies that 
$ p_3(S ,m ,\alpha ,\beta )=0$.

If $\gamma<2$, then embedding is of case 3.
We apply a similar argument. Now, $R_2=\sqrt{\frac{(m-S)S}{(S-\beta)m}}$. Hence, $d_3^2=d_5^2$ implies $p_1(S,m,\beta,\alpha)=0$ and $d_2^2=d_4^2$ implies $p_2(S,m,\beta,\alpha)=0$.
Finally, $R_2=\sqrt{2-f(m-S)}-\sqrt{f(m)-f(m-S)}$ and 
$
\frac{d_2^2}{d_3^2}=f(-s) =\frac{S-\alpha}{S-\beta}$ implies $ p_3(S,m,\beta,\alpha)=0
$.

\end{proof}

\begin{remark}\label{rem4.2}
If $p_1(S ,m ,0,y )=0$, then $S \in\{0,1\}$. Now, $p_2(1,m ,0,y )=4(my + m - 2)m(y - 1)$ implies $m \in\{0,2/(y +1)\}$ or $y =1$. Finally, $p_3(1,\frac 2 {y +1},0,y )=(y -1)(y -3)$. Since $0\leq \beta <\alpha <S <m $, we can't find any feasible solution. 
\end{remark}

\section{Integer solutions of the polynomials system}

We want to find integer solutions of 
$ p_i(S_0,m_0,x_0,y_0)=0$ for $i=1,2,3$, in conjunction of that all the parameters $ S_0,m_0,y_0,\Lambda_0,T_0,n_0,k_0,r_0,s_0,l_0,N_0,P_0$ are integers, where
\begin{align*}
p_1(S,m,x,y)=& S^4 - 2S^2xm + x^2m^2 - 2S^3 + 2S^2x - 2Sxm + 2x^2m + S^2 - 2Sx + x^2;\\
p_2(S,m,x,y)= &S^4 + 2S^2xm - 4S^2ym + x^2m^2 - 4xym^2 + 4y^2m^2 - 2S^3 + 2S^2x \\
&+ 8S^2m - 6Sxm - 2x^2m- 4Sym + 4xym - 4Sm^2 + 4xm^2 + S^2 - 2Sx + x^2;\\
p_3(S,m,x,y)=&S^2x^2 - Smx^2 - 2S^2xy + 2Smxy + S^2y^2 - Smy^2 + S^2m \\
&+ 2S^2x - 2Smx - 2Sx^2 + mx^2 - 2S^2y + 2Sxy + S^2 - 2Sx + x^2.
\end{align*}

It would not be an easy task, but we can completely determine the solutions in the following steps. First, by the condition $p_1=p_2=p_3=0$, we could express all parameters $\Lambda, T, n, k, r, s,  N, P$ given in Proposition \ref{9C} by the variables $x$ and $z$. Then, we will define a magic auxiliary function $g(x,z)$ such that the function $g(x,z)$ could be bounded values on the unbounded domain of the $zx$ plane. Since $g$ is an integer and bounded, then there are only finitely many cases to be discussed and analysis. Therefore, we can completely determine the solution set. 

Notice that the subscript $|_0$ on these variables mean that the variables are evaluated at some particular values. For instance, $x_0$ is the notation for given value of variable $x$.
Let
\begin{equation} \label{eq:unprovide}
z := \frac{x (m + 1) - S (S + 1)}{2 S}.
\end{equation}

Lemma \ref{lem:semiportable} allows us to rewrite the parameters in Proposition \ref{9C} as rational functions in just two variables, $x$ and $z$. 
\begin{lemma} \label{lem:semiportable}
Assume that $S, m, x \neq 0$, $p_1(S, m, x, y) = 0$ and $p_2(S, m, x, y) = 0$. Then,
\begin{align}
\label{eq:baud}
S & = x + z^2, \\
\label{eq:bronchioli}
m & = (x + z^2 + z)^2 / x, \\
\label{eq:oligandrous}
y & = \text{$y_1$ or $y_2$},
\end{align}
where
\begin{align*}
y_1 & = x - z, \\
y_2 & = \frac{x^3+2 x^2 z^2+x^2 z+x z^4+2 x z^3+3 x z^2+z^5+2 z^4+z^3}{\left(x+z^2+z\right)^2}.
\end{align*}
Moreover, if $S_0, m_0, x_0, y_0$ are nonzero integers such that $p_1(S_0, m_0, x_0, y_0) = p_2(S_0, m_0, x_0, y_0) = 0$, then $z_0 := z|_{S = S_0, m = m_0, x = x_0, y = y_0}$ is an integer.
\end{lemma}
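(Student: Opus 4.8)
The plan is to reduce everything to two ``completing the square'' identities for $p_1$ and $p_2$, after which only elementary algebra remains. First I would verify, by direct expansion, that
\[
p_1(S,m,x,y) \;=\; \bigl(S^2+S-x(m+1)\bigr)^2 - 4S^2(S-x),
\]
\[
p_2(S,m,x,y) \;=\; \bigl(S^2+S-x+m(x-2y)\bigr)^2 - 4(m-S)^2(S-x).
\]
These are precisely the squared forms underlying the proof of Theorem~\ref{thm1}: there the equalities $2S\sqrt{S-x}=-(S^2-x(m+1)+S)$ and $2(m-S)\sqrt{S-x}=-(S^2+xm-2ym+S-x)$ were used, and squaring them reproduces $p_1$ and $p_2$.

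Next I would derive (\ref{eq:baud}) and (\ref{eq:bronchioli}). By the definition (\ref{eq:unprovide}) of $z$ we have $2Sz = x(m+1)-S(S+1)$, so $S^2+S-x(m+1) = -2Sz$; plugging this into the first identity gives $p_1 = 4S^2\bigl(z^2-(S-x)\bigr)$. Since $S\neq 0$ and $p_1=0$, this forces $S-x=z^2$, which is (\ref{eq:baud}). For (\ref{eq:bronchioli}), write $xm = x(m+1)-x = S(S+1)+2Sz-x = (S^2+2Sz+z^2)+(S-x-z^2)$; the last parenthesis vanishes by (\ref{eq:baud}), so $xm=(S+z)^2=(x+z^2+z)^2$, and dividing by $x\neq 0$ gives (\ref{eq:bronchioli}).

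Now I would use $p_2=0$ to get (\ref{eq:oligandrous}). From (\ref{eq:baud}), $S-x=z^2$, and the previous paragraph gives $S^2+S-x = xm-2Sz$, whence $S^2+S-x+m(x-2y) = 2\bigl(m(x-y)-Sz\bigr)$. Substituting into the second identity and factoring a difference of two squares,
\[
p_2 \;=\; 4\Bigl[\bigl(m(x-y)-Sz\bigr)^2-\bigl((m-S)z\bigr)^2\Bigr] \;=\; 4m\,(x-y-z)\,\bigl(m(x-y+z)-2Sz\bigr).
\]
Since $m\neq 0$, $p_2=0$ yields either $y=x-z=y_1$, or $m(x-y+z)=2Sz$, i.e. $y=x+z-2Sz/m$; substituting $S=x+z^2$ and $m=(x+z^2+z)^2/x$ into this second value and collecting over the common denominator $(x+z^2+z)^2$ gives $y=y_2$. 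I expect this final simplification---checking that $(x+z)(x+z^2+z)^2-2xz(x+z^2)$ equals the stated numerator of $y_2$---to be the only place requiring genuine care; it is a routine but somewhat lengthy polynomial expansion.

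Finally, for the integrality claim: if $S_0,m_0,x_0,y_0$ are nonzero integers with $p_1=p_2=0$, then $z_0=(x_0(m_0+1)-S_0(S_0+1))/(2S_0)$ is patently rational, while the argument above shows $z_0^2=S_0-x_0\in\mathbb Z$. A rational number whose square is an integer is itself an integer (immediate from its lowest-terms representation), so $z_0\in\mathbb Z$. The genuine obstacle in the whole argument is spotting the completing-the-square identities of the first paragraph; once those are in hand, everything else is bookkeeping.
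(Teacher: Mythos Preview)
Your proof is correct. The paper's own argument is a direct substitution: solve the defining relation of $z$ for $m$, plug into $p_1/S^2=0$ to obtain (\ref{eq:baud}), back-substitute for (\ref{eq:bronchioli}), and then insert both into $p_2/m=0$ to obtain a quadratic in $y$ whose roots are $y_1,y_2$. Your route is the same computation reorganized around the explicit difference-of-squares identities $p_1=(S^2+S-x(m+1))^2-4S^2(S-x)$ and $p_2=(S^2+S-x+m(x-2y))^2-4(m-S)^2(S-x)$; these make (\ref{eq:baud}) immediate once $S^2+S-x(m+1)=-2Sz$ is recognized, and yield the clean factorization $p_2=4m\,(x-y-z)\bigl(m(x-y+z)-2Sz\bigr)$, from which $y_1$ and $y_2$ drop out without solving a quadratic. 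The integrality step is also slightly tidier in your version: the paper records that both $z_0^2=S_0-x_0$ and $(S_0+z_0)^2=m_0x_0$ are integers, whereas you note directly that $z_0$ is rational by definition and has integer square, which already forces $z_0\in\mathbb Z$. In short, the two proofs are equivalent, but your completing-the-square presentation makes the algebraic structure (and the origin of $p_1,p_2$ in the squared relations of Theorem~\ref{thm1}) more transparent.
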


\begin{proof}
From (\ref{eq:unprovide}), we get
\begin{equation} \label{eq:twiller}
m = \frac{S (S + 1 + 2 z)}{x} - 1.	
\end{equation}
Equation (\ref{eq:baud}) follows from substituting  (\ref{eq:twiller}) into $p_1 / S^2 = 0$. Substitute equation (\ref{eq:baud}) back into  (\ref{eq:twiller}) and we get  (\ref{eq:bronchioli}). Substitute both equation (\ref{eq:baud}) and (\ref{eq:bronchioli}) into $p_2 / m = 0$, and we get a quadratic equation in $y$, which has two solutions of the equation (\ref{eq:oligandrous}).

Now let $S_0, m_0, x_0, y_0$ be integers satisfying the assumptions. We see from (\ref{eq:baud}) that $z_0^2 = S_0 - x_0$ is an integer, and from (\ref{eq:baud}) and (\ref{eq:bronchioli}) that $(S_0 + z_0)^2 = m_0 x_0$ is an integer. Therefore, $z_0$ is an integer.
\end{proof}

There are two cases that $y = y_1$ and $y = y_2$.

For $y=y_1$ we discuss in Proposition \ref{prop:did} and $y=y_2$ in Proposition \ref{prop:retinker}.

\begin{proposition} \label{prop:did}
Treat all parameters $\Lambda, T, n, k, r, s,  N, P$ given in Proposition \ref{9C} as elements in $\R(x, z)$ by Lemma \ref{lem:semiportable} and $y = y_1$. Let $x_0$ be a positive integer and $z_0$ an integer such that $\Lambda_0, T_0, n_0, k_0, r_0, s_0, l_0, N_0, P_0$ are all integers. Then, either $x_0 = z_0 (z_0 + 1) / 2$, or $z_0 = 0$, or $z_0 = -1$.
\end{proposition}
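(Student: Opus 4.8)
The plan is to substitute the rational expressions for the block-graph parameters given by Lemma~\ref{lem:semiportable} (with $y=y_1=x-z$) into the formulas of Proposition~\ref{9C}, clear denominators, and extract arithmetic consequences from the integrality of each parameter. First I would compute $S=x+z^2$, $m=(x+z^2+z)^2/x$, $y=x-z$, and then the derived parameters $\Lambda,T,N,P,r,k,n,s$ as elements of $\R(x,z)$; the key observation is that all of these become rational functions whose denominators are (products of) powers of $x$ and of $(x+z^2+z)$ and small constants, since the ``hard'' factors like $S-\alpha$, $\alpha-\beta$, $m-S$ etc. simplify dramatically in these coordinates (for instance $\alpha-\beta=x-y=z$, $S-\alpha=S-x=z^2$, so $s=(\beta-S)/(\alpha-\beta)=(y-S)/z=(x-z-x-z^2)/z=-1-z$ is automatically an integer whenever $z$ is). So the genuinely new integrality constraints will come from $\Lambda$, $T$, $n$, $k$, $N$, $P$, whose reduced denominators will involve $x$ and $x+z^2+z$ in a controlled way.

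The main step is then: after writing each parameter as $(\text{polynomial in }x,z)/(\text{monomial-type denominator in }x,\ x+z^2+z)$, deduce that $x \mid (\text{some explicit polynomial in }z)$ and $(x+z^2+z)\mid(\text{some explicit polynomial})$, or more precisely that certain small polynomial combinations must be integers. The cleanest route is probably to find, among $\Lambda,T,n,k,N,P$, one whose denominator after reduction is exactly $2x$ or $x$ up to a unit; integrality of that parameter forces $x \mid f(z)$ for an explicit $f$. Combining two such divisibility conditions (say from $T$ and from $N$, or from $\Lambda$ and $k$) should pin down $\gcd$-type information: I expect to derive that $2x \mid z(z+1)\cdot(\text{unit})$ or something equivalent, so that either $x \mid z(z+1)/2$ giving (after a size/positivity check using $0\le\beta<\alpha<S<m$) the equality $x_0=z_0(z_0+1)/2$, or the reduction breaks down precisely when the denominator $x+z^2+z$ or $x$ acquires an extra common factor with the numerator — and chasing that degenerate case is where $z_0=0$ and $z_0=-1$ emerge (note $x+z^2+z=x$ iff $z^2+z=0$ iff $z\in\{0,-1\}$, which is exactly why those two values are exceptional).

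Concretely the steps in order: (i) substitute and reduce all eight parameters to lowest terms in $\Z[x,z]$-fractions; (ii) identify the list of denominators and factor them, observing they are supported on $x$, $x+z^2+z$, $2$, and possibly $z$; (iii) when $z_0\notin\{0,-1\}$ so that $x+z^2+z$ and $x$ are ``in general position,'' use integrality of the relevant parameters to get $x_0 \mid g_1(z_0)$ and $(x_0+z_0^2+z_0)\mid g_2(z_0)$ for explicit $g_1,g_2$; (iv) manipulate these (e.g.\ reduce $g_1,g_2$ modulo $x_0$ and modulo $x_0+z_0^2+z_0$ using $x_0\equiv -z_0^2-z_0$) until the congruences collapse to $2x_0 \mid z_0^2+z_0$, equivalently $x_0 = z_0(z_0+1)/2$ once one also checks that the quotient is forced to be $1$ by the remaining constraints and the positivity $x_0>0$, $S_0<m_0$; (v) handle $z_0=0$ and $z_0=-1$ as the stated exceptional outputs (these need no further argument since they are already in the conclusion).

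The hard part will be step (iv): extracting a \emph{sharp} divisibility — not just $x_0 \mid (\text{something})$ but enough to force the exact value $z_0(z_0+1)/2$ — may require combining three or four of the parameter-integrality conditions simultaneously and doing a careful resultant/gcd computation in $\Z[z]$, which is the kind of step the authors flag as computer-assisted. A secondary subtlety is keeping track of the sign/size conditions ($0\le y<x<S<m$, $\Lambda\ge 1$, $N,P\ge 0$) to rule out the spurious large factor, i.e.\ to go from $x_0 \mid z_0(z_0+1)$ plus parity to the clean statement $x_0=z_0(z_0+1)/2$ rather than $x_0=z_0(z_0+1)$ or a proper divisor.
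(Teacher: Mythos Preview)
Your approach is genuinely different from the paper's, and the gap is exactly where you flag it: step~(iv). Divisibility conditions of the shape $x_0\mid g_1(z_0)$ and $(x_0+z_0^2+z_0)\mid g_2(z_0)$ give one-sided (upper) bounds on $x_0$ in terms of $z_0$; they do not by themselves force the \emph{exact} equality $x_0=z_0(z_0+1)/2$. You appeal to ``the remaining constraints and the positivity $x_0>0$, $S_0<m_0$'' to close this, but the Proposition as stated assumes neither $S_0<m_0$ nor any of the design inequalities $0\le\beta<\alpha<S<m$, $\Lambda\ge1$, $N,P\ge0$; it assumes only that the listed parameters are integers and $x_0\ge1$. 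So the lower bound you need has no obvious source, and even if one imported the full design constraints, you have not indicated which combination would produce it. (For a concrete illustration of why divisibility alone is far too weak: from $m_0\in\Z$ one gets $x_0\mid z_0^2(z_0+1)^2$, which permits $x_0$ as large as $z_0^2(z_0+1)^2$.)

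The paper bypasses divisibility entirely. It writes down a specific $\Z$-linear combination of the parameters,
\[
g \;=\; 3 + x + 19 z + 16 z^2 + 3 k + 3 \Lambda - m - 4 n - 18 P + 21 z r,
\]
which is automatically an integer at $(x_0,z_0)$, and then verifies by computer (a semialgebraic computation, not an arithmetic one) that as a rational function of $(x,z)$ this $g$ lies strictly in $(0,1)$ on the regions $\{x\ge z(z+1)/2+1,\ z\notin(-2,1)\}$ and $\{x\le z(z+1)/2-2\}$, and in $(0,2)$ on the line $x=z(z+1)/2-1$. Integrality of $g_0$ then kills all three regions except for the single possibility $x_0=z_0(z_0+1)/2-1$ with $g_0=1$, which is disposed of by solving directly. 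The creative content is the existence of such a $g$ whose limiting value off the curve $x=z(z+1)/2$ is a non-integer; this is the ``auxiliary equation'' method referenced in the introduction, and it is analytic in flavor rather than congruential. Your proposal does not contain an analogue of this step.
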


\begin{proof}
Consider the auxiliary polynomial
\[
	g := 3 + x + 19 z + 16 z^2 + 3 k + 3 \Lambda - m - 4 n - 18 P + 21 z r.
\]
Since this is a polynomial of parameters with integral coefficients, $g_0$ is an integer.
Observe that $$\mathbb R^2=\bigcup_{z,k\in\mathbb R}(z,z(z+1)/2+k).$$ 
We want to find $(z,x)\in \mathbb Z\times\mathbb Z_{\geq 1}$, so $k\in\mathbb Z$.
Use computers to prove that:
\begin{itemize}
\item(Region1): $g \in (0, 1)$ when $z \in (-\infty, -2] \cup [1, +\infty)$, $x \geq 1$ and $x \geq z (z + 1) / 2 + 1$;
\item(Region2): $g \in (0, 2)$ when $x \geq 1$ and $x = z (z + 1) / 2 - 1$;
\item(Region3): $g \in (0, 1)$ when $x \geq 1$ and $x \leq z (z + 1) / 2 - 2$.
\end{itemize}
\begin{figure}[h]
    \centering
    \includegraphics[scale=0.7]{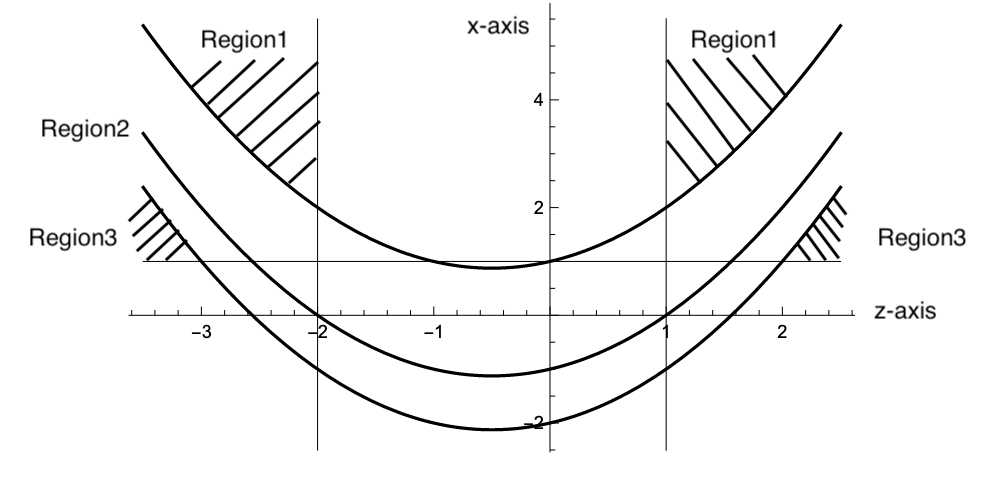}    
    \caption{Regions}
    \label{area}
\end{figure}
The regions are showed in Figure \ref{area}. By the facts above and that $g_0$ is an integer, if $(x_0, z_0)$ does not satisfy the conclusion of this lemma, then the only possibility is that $x_0 = z_0 (z_0 + 1) / 2 - 1$ and $g_0 = 1$. Solving this system of equations would give us $z_0 = \frac{1}{2}(- 1 \pm \sqrt{41})$, which contradicts the fact that $z_0$ is an integer.
\end{proof}

\begin{proposition} \label{prop:retinker}
Under the same assumption of Proposition \ref{prop:did} except that $y = y_2$, we assume in addition that $p_3(S_0, m_0, x_0, y_0) = 0$. Then, $z_0 = 0$.
\end{proposition}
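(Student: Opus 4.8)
The plan is to reduce the problem to the curve in the $(x,z)$-plane cut out by $p_3=0$ and then to run the bounded-auxiliary-polynomial argument of Proposition~\ref{prop:did} on that curve. By Lemma~\ref{lem:semiportable}, on the branch $y=y_2$ we may regard $S=x+z^2$, $m=(x+z^2+z)^2/x$, $y=y_2$, and all the parameters of Proposition~\ref{9C}, as rational functions of the two integer variables $x\in\mathbb{Z}_{\geq 1}$ and $z\in\mathbb{Z}$; since $z(z+1)\geq 0$ for every integer $z$, the quantity $w:=x+z^2+z$ satisfies $w\geq x\geq 1$, so no denominator vanishes. First I would substitute these expressions into $p_3(S,m,x,y)=0$ and clear denominators, obtaining $q(x,z)=0$ with $q\in\mathbb{Z}[x,z]$ and $q(x_0,z_0)=0$. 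When $z=0$ one has $S=m=x=y$, and $p_1,p_2,p_3$ all vanish identically under the substitution $S=m=x=y$; hence $z\mid q$, and we may write $q=z^{e}\,\tilde q$ with $z\nmid\tilde q$. If $z_0\neq 0$ then $\tilde q(x_0,z_0)=0$, so it suffices to rule this out.

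To do so I would follow the template of Proposition~\ref{prop:did}: search for an auxiliary polynomial $g$ in the integer-valued parameters $\Lambda,T,n,k,r,s,N,P$ of Proposition~\ref{9C} (together with $x,z,m$) having integer coefficients, so that $g_0\in\mathbb{Z}$, and then verify by computer that on the semialgebraic set $\{\,\tilde q(x,z)=0,\ x\geq 1\,\}$ — possibly after partitioning it into finitely many arcs — the rational function $g$ takes values strictly between two consecutive integers except at finitely many points. The reason to expect such a $g$ to exist is that, unlike the half-plane appearing in Proposition~\ref{prop:did}, the set $\tilde q=0$ is one-dimensional, so along each real branch the parameters are functions of a single variable and generically tend to finite limits at infinity, which is precisely the setting in which the ``$g$ squeezed between consecutive integers'' phenomenon occurs. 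Then $(x_0,z_0)$ must lie in the finite exceptional set, and a direct check of each of those points against the integrality of the parameters (equivalently $m_0=w_0^{2}/x_0\in\mathbb{Z}$ and $y_0=y_2(x_0,z_0)\in\mathbb{Z}$) and the remaining design constraints eliminates all of them except those with $z_0=0$.

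The principal obstacle is the same as in Proposition~\ref{prop:did}, and in two parts. The first is discovering an auxiliary polynomial $g$ and a partition of the curve $\tilde q=0$ (for $x\geq 1$) into finitely many arcs on each of which $g$ is pinned between consecutive integers; this is a matter of experimentation with linear combinations of the parameters. The second, and more delicate, is certifying these strict inequalities rigorously over an unbounded semialgebraic set rather than merely numerically — for instance by an exact cylindrical algebraic decomposition or a Positivstellensatz / sum-of-squares certificate, in the spirit of the real-algebraic-geometry computations the paper relies on elsewhere. The remaining ingredients — substituting the parametrization into $p_3$, clearing denominators, splitting off the factor $z^{e}$, and checking the finitely many exceptional points — are routine computer algebra.
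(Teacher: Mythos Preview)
Your overall architecture---express everything in the two variables $x,z$ via Lemma~\ref{lem:semiportable}, strip the factor $z$ from the substituted $p_3$, and then run an integer-valued auxiliary polynomial against the remaining curve---is sound and close to what the paper does. The difference is in how the auxiliary polynomial $g$ is used, and it is worth noting because the paper's variant is easier to make work.

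You propose to find a $g$ which, \emph{restricted to the curve} $\tilde q=0$, is squeezed between two consecutive integers on each unbounded arc. That requires the asymptotic value of $g$ along every branch of $\tilde q=0$ to be a non-integer, which is a delicate condition to arrange. The paper instead bounds $g$ on the whole two-dimensional region $\{x\geq 3\}$ only to a moderate interval, roughly $(-1,38)$ (and separately to $(31,33)$ for $x\in\{1,2\}$ with $|z|$ large). No ``consecutive integers'' are needed: for every integer $i$ in that range one then intersects the two plane curves $p_3=0$ and $g=i$. Generically this is a zero-dimensional set, and in fact it is so here except for $i=0$, where the common component is exactly $z=0$; the remaining finitely many points are then checked directly. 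In short, the paper trades a sharper bound on $g$ for the extra algebraic relation $p_3=0$ at the intersection step, which makes the search for $g$ much more forgiving. Your plan is not wrong, but if you cannot force the limits of $g$ along each branch of $\tilde q=0$ to be non-integers, fall back to this two-curve intersection trick.

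Two small points. First, your list of building blocks for $g$ should also allow $y$ and products such as $mz$, $nz$, $Tz$, $xz$, $z^2$, $z^3$; the paper's $g$ uses exactly these, and without them it is hard to cancel the growth. Second, your claim that ``no denominator vanishes'' because $w=x+z^2+z\geq 1$ covers $m$ and $y_2$, but the parameters of Proposition~\ref{9C} also carry factors like $\alpha-\beta$ and $S-\alpha$ in their denominators; these are handled implicitly by the standing assumption that the parameters are integers (hence defined), but it is worth saying so.
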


\begin{proof}
Consider the auxiliary polynomial
\[
	g := -72 \Lambda + 13 m + 13 n + 99 T - 45 x + 32 y - 14 z - 13 m z + 39 n z - 13 T z + 13 x z - 33 z^2 + 13 z^3.
\]
Since it is an integer coefficient polynomial in parameters, $g_0$ is an integer.

Use computers to prove that:
\begin{itemize}
\item $g \in (31, 33)$ when $z \in (-\infty, -15] \cup [10, +\infty)$ and $x \in \{1, 2\}$.
\item $g \in (-1, 38)$ when $x \geq 3$.
\end{itemize}

\noindent Case 1. $x_0 \in \{1, 2\}$.

An enumeration of small pairs $(x_0, z_0)$ shows that there are no small integral pair which gives $g_0 = 32$.
For $x_0\in\{1,2\}$ and $z_0\in \{-14,\dots,9\}$, only $ g(1,-1)=136, g(1,0)=0 ,g(2,0)=0$ are integers. But, $\Lambda(1,-1)=-1$. 

\noindent Case 2. $x_0\geq3$ and $g_0 \in \{1, \dots, 37\}$.

For each $i \in \{1, \dots, 37\}$, the intersection of the curves $p_3 = 0$ and $g = i$ consists of finitely many points. An explicit calculation shows that the only integral points are $(x_0, z_0) = (0, 1)$ and $(x_0, z_0) = (-9, 3)$. Both violate the assumption that $x_0$ is positive.

\noindent Case 3. $x_0\geq 3$ and $g_0 = 0$.

The intersection of the curves $p_3 = 0$ and $g = 0$ consists of a curve $z = 0$, and $8$ points, all of which are not integral points. \end{proof}

\begin{theorem}\label{solufamily}
Consider the system of equations $p_1(S, m, x, y) = p_2(S, m, x, y) = p_3(S, m, x, y) = 0$. The only integral solutions $(S_0, m_0, x_0, y_0)$ such that $S_0, m_0 \neq 0$, $x_0 \geq 1$ and $\Lambda_0,$ $T_0,$ $n_0,$ $k_0,$ $r_0,$ $s_0,$ $l_0,$ $N_0,$ $P_0$ are all integers are on the following parametrized curves.
\begin{enumerate}
	\item[(i)] $S = \frac{1}{2} z (3 z + 1)$, $m = \frac{9}{2} z (z + 1)$, $x = \frac{1}{2} z (z + 1)$, $y = \frac{1}{2} z (z - 1)$.
	\item[(ii)] $S = z$, $m = z$, $x = z$, $y = z$.
	\item[(iii)] $S = z + 1$, $m = z$, $x = z$, $y = z + 1$.
\end{enumerate}

\end{theorem}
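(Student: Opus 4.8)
\textbf{Proof proposal for Theorem \ref{solufamily}.}

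The plan is to assemble Theorem \ref{solufamily} from the case analysis already set up in Lemma \ref{lem:semiportable}, Proposition \ref{prop:did}, and Proposition \ref{prop:retinker}. First I would dispose of the trivial situation: by Remark \ref{rem4.2}, if $x_0 = 0$ then there is no feasible solution, so we may indeed assume $x_0 \geq 1$, and since $S_0, m_0 \neq 0$ and $x_0 \neq 0$, all the hypotheses of Lemma \ref{lem:semiportable} are in force. That lemma tells us $z_0$ is an integer and forces $S_0 = x_0 + z_0^2$, $m_0 = (x_0 + z_0^2 + z_0)^2 / x_0$, with $y_0 \in \{y_1, y_2\}$ evaluated at $(x_0, z_0)$. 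Thus every admissible solution is parametrized by the pair $(x_0, z_0) \in \mathbb Z_{\geq 1} \times \mathbb Z$, and the whole theorem reduces to determining which such pairs are consistent with all the integrality constraints on $\Lambda_0, T_0, n_0, k_0, r_0, s_0, l_0, N_0, P_0$ together with $p_3 = 0$.

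Next I would split according to the branch of $y$. For $y = y_1 = x - z$: here $p_3 = 0$ is not automatically satisfied, but one checks directly (a short substitution using $S = x + z^2$, $m = (x+z^2+z)^2/x$) that $p_3(S, m, x, y_1)$ factors so that $p_3 = 0$ is equivalent to $x = z(z+1)/2$; this identifies branch (i). Indeed, plugging $x = z(z+1)/2$ into $S = x + z^2$ gives $S = \tfrac12 z(3z+1)$, into $m$ gives $m = \tfrac92 z(z+1)$, and $y_1 = x - z = \tfrac12 z(z-1)$, exactly curve (i). It remains to rule out the two exceptional outcomes of Proposition \ref{prop:did}, namely $z_0 = 0$ and $z_0 = -1$, unless they already lie on one of the listed curves. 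When $z_0 = 0$: $S_0 = x_0$, $m_0 = x_0$, $y_1 = x_0$, which is branch (ii) with $z = x_0$. When $z_0 = -1$: $S_0 = x_0 + 1$, $m_0 = (x_0 + 1 - 1)^2 / x_0 = x_0$, $y_1 = x_0 + 1$, which is branch (iii) with $z = x_0$. So for $y = y_1$, Proposition \ref{prop:did} gives precisely branches (i), (ii), (iii) after reparametrizing $z$.

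For $y = y_2$ one invokes Proposition \ref{prop:retinker}, which (using $p_3 = 0$) forces $z_0 = 0$. Setting $z_0 = 0$ in Lemma \ref{lem:semiportable}: $S_0 = x_0$, $m_0 = x_0^2 / x_0 = x_0$, and $y_2$ evaluated at $z = 0$ is $x_0^3 / x_0^2 = x_0$; so again $S_0 = m_0 = x_0 = y_0$, which is branch (ii). Hence the $y = y_2$ branch contributes nothing new. Collecting the two cases yields exactly the three parametrized curves (i)–(iii), completing the proof.

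The only genuine obstacle is bookkeeping rather than mathematics: one must make sure that the reparametrizations ``$z \mapsto z$'' in the statement (the $z$ of the theorem is not literally the $z_0$ of Lemma \ref{lem:semiportable} on branches (ii), (iii) — there $z$ plays the role of $x_0$) are stated correctly, and that no admissible $(x_0, z_0)$ is lost at the boundary between Region1, Region2, Region3 in Proposition \ref{prop:did} or between the ranges of Proposition \ref{prop:retinker}. Since those boundary checks were already carried out in the cited propositions (the $z_0 = \tfrac12(-1 \pm \sqrt{41})$ and $(x_0,z_0) = (0,1), (-9,3)$ exclusions), the present argument only needs to translate their conclusions into the parametrized form and verify that the exceptional values $z_0 \in \{0, -1\}$ land on curves (ii) and (iii). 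I expect this to be routine.
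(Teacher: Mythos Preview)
Your overall architecture matches the paper's proof: invoke Lemma \ref{lem:semiportable} to reduce to integer pairs $(x_0,z_0)$, split on $y=y_1$ versus $y=y_2$, feed the first branch into Proposition \ref{prop:did} and the second into Proposition \ref{prop:retinker}, and then read off curves (i)--(iii). The identification of the $z_0=0$ and $z_0=-1$ subcases with curves (ii) and (iii) (after reparametrizing) is exactly what the paper does, and your treatment of the $y=y_2$ branch is correct.

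There is, however, a factual error in your handling of the $y=y_1$ branch. You assert that ``$p_3=0$ is not automatically satisfied'' there and that after substituting $S=x+z^2$, $m=(x+z^2+z)^2/x$, $y=x-z$ the equation $p_3=0$ factors so as to force $x=z(z+1)/2$. This is false: the substitution makes $p_3$ vanish \emph{identically}. One can check this on any test point, e.g.\ $(x_0,z_0)=(1,2)$ gives $(S_0,m_0,x_0,y_0)=(5,49,1,-1)$ and a direct evaluation yields $p_3(5,49,1,-1)=0$, even though $x_0=1\neq 3=z_0(z_0+1)/2$ and $z_0\notin\{0,-1\}$. So $p_3$ carries no information on the $y_1$ branch, and the constraint ``$x_0=z_0(z_0+1)/2$ or $z_0\in\{0,-1\}$'' comes \emph{entirely} from the integrality hypotheses via Proposition \ref{prop:did}, not from $p_3$. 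The fix is minimal: drop the $p_3$ claim on this branch and simply invoke Proposition \ref{prop:did} to obtain the three subcases directly, then substitute as you already do. With that correction your argument coincides with the paper's.
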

\begin{proof}
Recall that 
\begin{align*}
S & = x + z^2, \\
m & = (x + z^2 + z)^2 / x, \\
y & = \text{$y_1$ or $y_2$},
\end{align*}
where
\begin{align*}
y_1 & = x - z, \\
y_2 & = \frac{x^3+2 x^2 z^2+x^2 z+x z^4+2 x z^3+3 x z^2+z^5+2 z^4+z^3}{\left(x+z^2+z\right)^2}.
\end{align*}
For the case $y=y_1$,  we have $x_0 = z_0 (z_0 + 1) / 2$, or $z_0 = 0$, or $z_0 = -1$  by Proposition \ref{prop:did}.
If $x = z (z + 1) / 2$, then $S = \frac{1}{2} z (3 z + 1)$, $m = \frac{9}{2} z (z + 1)$, $x = \frac{1}{2} z (z + 1)$, $y = \frac{1}{2} z (z - 1)$.
If $z=0$, then $S=m=y=x$. If $z=-1$, then $S=y=x+1$ and $m=x$.

For the case $y=y_2$, we have $z_0 = 0$ by Proposition \ref{prop:retinker}.
Thus,   $S=m=y=x$ holds.\end{proof}

\section{Proof of main Theorem}

\begin{proof}[Proof of the Theorem \ref{main thm}] Let $\mathcal D$ be a $2$-$(m_0,S_0,\Lambda_0)$ quasi-symmetric design satisfying the conditions in Theorem \ref{thm1}.
It follows from Theorem \ref{thm1} that 
$$ p_1(S_0,m_0,x_0,y_0)=p_2(S_0,m_0,x_0,y_0)=p_3(S_0,m_0,x_0,y_0)=0.$$
By Remark \ref{rem4.2}, $x_0\geq 1$.
From Theorem \ref{solufamily}, we have three possible solutions need to be discussed.
Parametric (ii) and (iii) give $S_0=\alpha_0$ which is a contradiction.
It remains to consider the Parametrized solution (i).
From Parametrized solution (i):
\begin{align*}
S = \frac{1}{2} z (3 z + 1),\,\,\, m = \frac{9}{2} z (z + 1),\,\,\, x = \frac{1}{2} z (z + 1),\,\,\, y = \frac{1}{2} z (z - 1)
        \end{align*}
By Remark \ref{rem4.2} and Lemma \ref{lem:semiportable}, $z_0$ should be an integer.

Suppose $2\leq S_0 \leq 3$.
 If $S_0=2$, then  $z_0=1$ or $-4/3$. Lemma \ref{lem:semiportable} implies $z_0=1$ which is corresponding to the Lison\v ek's example.
 If $S_0=3$, then $z_0=\pm \sqrt{73}/6 - 1/6$ which is impossible.
 
 Suppose $4\leq S_0\leq m_0-4$. Since $n_0={m_0\choose 2}$, by Theorem \ref{thm3}, $\mathcal D$ is the $4$-$(23,7,1)$ design. Now, $S_0=7$ implies $z_0\in \{-7/3,2\}$.
If $z_0=2$, then $m_0=27\neq 23$. 
 
 Suppose $S_0=m_0-3$, then
$z_0=\pm\sqrt{13}/3 - 2/3$ which is impossible.

 Suppose $S_0=m_0-2$, then
 $z_0=\pm\sqrt{10}/3 - 2/3$ which is impossible.

 Suppose $S_0=m_0-1$, then
 $z_0=\pm \sqrt 7/3 - 2/3$ which is impossible.
\end{proof}

\section{Discussions}
We would like to propose some further research problems. 

(1) In Nozaki-Shinohara \cite{nozaki2020maximal}, they consider the two-distance
sets in $\mathbb R^d$ which contain a regular simplex and a strongly regular graph. We believe that
the case where:
the regular simplex is of size $d+1$ and the strongly regular
graph comes from a natural embedding (with respect to a
primitive idempotent of rank $d$) would be the most interesting (and extremal) case. We discussed this problem assuming the additional
condition that the two-distance set has the structure of a
coherent configuration of type (2,2;3). We wonder whether it is
possible to drop this additional condition on the existence of
the coherent configuration.

(2) It would be interesting whether there exists any
two-distance set in $\mathbb R^d$ coming from the natural embedding
of a coherent configuration of type (3,2;3).

(3) It is natural, although it is not so easy, to try to
generalize the discussion on two-distance sets to
three-distance sets in some way. For example, can we
classify three-distance set in $\mathbb R^d$ coming from the
natural embedding of a coherent configuration of
type (2,2;4). (There are many other possibilities.)

(4) Although it seems to be a difficult problem,
it would be interesting to study two-distance set $X$ in
$\mathbb R^d$ of the maximum cardinality ${d+ 2 \choose 2},$ whether
we can find a structure of a coherent configuration,
or some combinatorial structure close to the coherent
configuration?

\bibliographystyle{amsplain}
\bibliography{main}

\end{document}